\documentclass{amsart}

\usepackage{amsmath,amsthm,amsfonts, amssymb,amscd}
\usepackage[all]{xy}

\newtheorem{theorem}{Theorem}[section]
\newtheorem{lemma}[theorem]{Lemma}
\newtheorem{remark}[theorem]{Remark}
\newtheorem{corollary}[theorem]{Corollary}
\newtheorem{proposition}[theorem]{Proposition}

\newcommand{\minusre}{\hspace{0.3em}\raisebox{0.3ex}{\sl \tiny /}\hspace{0.3em}}
\newcommand{\minusli}{\hspace{0.3em}\raisebox{0.3ex}{\sl \tiny $\setminus $}\hspace{0.3em}}

\newcommand{\Ker}{\mbox{\rm Ker}}

\begin{document}
\title{Decompositions  of  Measures on Pseudo Effect Algebras }
\author{Anatolij Dvure\v censkij}
\date{}
\maketitle

\begin{center}
\footnote{Keywords: Pseudo effect algebra; effect algebra; Riesz
Decomposition Properties; signed measure, state; Jordan signed
measure, unital po-group; simplex; Yosida--Hewitt decomposition;
Lebesgue decomposition

AMS classification:  81P15, 03G12, 03B50

The  author thanks  for the support by Center of Excellence SAS
-~Quantum Technologies~-,  ERDF OP R\&D Projects CE QUTE ITMS
26240120009 and meta-QUTE ITMS 26240120022, the grant VEGA No.
2/0032/09 SAV. }
\small{Mathematical Institute,  Slovak Academy of Sciences\\
\v Stef\'anikova 49, SK-814 73 Bratislava, Slovakia\\
E-mail: {\tt dvurecen@mat.savba.sk}, }
\end{center}

\begin{abstract}
Recently in \cite{Dvu3} it was shown that if a pseudo effect algebra
satisfies a kind of the Riesz Decomposition Property ((RDP) for
short), then its state space is either empty or a nonempty simplex.
This will allow us to prove a Yosida--Hewitt type and a Lebesgue
type decomposition for measures on pseudo effect algebra with (RDP).
The simplex structure of the state space will entail not only the
existence of such a decomposition but also its uniqueness.
\end{abstract}

\section{Introduction} 

The classical decomposition theorems of   or Yosida--Hewitt
\cite{YoHe} initiated in the last two decades interest of authors
studying finitely additive measures on quantum structures like
orthomodular lattices or posets to study an interesting problem of
decomposition measures. There appeared a whole series of papers
studying Lebesgue and Yosida-Hewitt type decompositions, see e.g.
\cite{DDP, DeMo, Rut1, Rut2}. They exhibited at least the existence
of such a decomposition.  To prove even the uniqueness of
decompositions, some sufficient conditions are presented in
\cite{Rut2}.

Quantum structures were inspired by the research of the mathematical
foundations of quantum structures. An analogue of a probability
measure is a {\it state}. One of the most important examples of
orthomodular lattices or of the Hilbert space quantum mechanics is
the system $\mathcal L(H)$ of all closed subspaces of a Hilbert
space (real, complex or quaternionic) $H.$  The Gleason Theorem, see
e.g. \cite{Dvu0}, says that every $\sigma$-additive state on
$\mathcal L(H)$ is uniquely expressible via a Hermitian trace
operator on $H$ if $3\le \dim H\le \aleph_0.$ The Aarnes Theorem,
\cite[Thm 3.2.28]{Dvu0} says that every (finitely additive) state on
$\mathcal L(H)$ is a unique convex combination of two states, $s_1$
and $s_2,$ where $s_1$ is a completely additive state and $s_2$ is a
finitely additive state that vanishes on each finite-dimensional
subspace of $H.$

In the Nineties, Foulis and Bennett \cite{FoBe} introduced effect
algebras that are partial structures with a partially defined
operation $+$ that models the join of mutually excluding events.
They generalize orthomodular lattices and posets, orthoalgebras, and
the basic example important for so-called POV-measures of quantum
mechanics is the system, $\mathcal E(H),$ of all Hermitian operators
of a Hilbert space $H$ that are between the zero operator and the
identity one.

These commutative structures were extended in \cite{DvVe1, DvVe2} to
so-called {\it pseudo effect algebras} where the partial addition,
$+,$ is not more assumed to be commutative. In many important
examples they are  intervals in po-groups (= partially ordered
groups). E.g. $\mathcal E(H)$ is an interval in the po-group
$\mathcal B(H)$ of all Hermitian operators on $H.$

A sufficient condition for a pseudo effect algebra to be an interval
of a unital po-group is a variant of the Riesz Decomposition
Property ((RDP) in abbreviation), see \cite{Dvu2, DvVe2}. It is a
weaker form of the distributivity that allows to do a joint
refinement of two decompositions of the unit element $1.$ For
example, (RDP) on an orthomodular poset entails that it has to be a
Boolean algebra, and therefore, (RDP) fails to hold on $\mathcal
L(H)$ or on $\mathcal E(H).$

We recall that every effect algebra with (RDP) has at least one
state, however the state space of a pseudo effect algebra with a
stronger type of (RDP) can be empty, \cite{Dvu1}.

Recently in \cite[Thm 5.1]{Dvu3}, it was shown that the state space
of every pseudo effect algebra with (RDP) is a simplex, more
precisely a Choquet simplex. The simplex is a special type of a
convex set that generalizes the classical one in $\mathbb R^n.$ For
a comprehensive source on simplices see \cite{Alf}. We note that the
state space of $\mathcal E(H)$ is not a simplex, however the state
space of any commutative C$^*$-algebra is a simplex, see e.g.
\cite[Thm 4.4, p. 7]{AlSc}. The simplex structure of the state
spaces allows also to represent uniquely  states as an integral
\cite{Dvu4} through a regular Borel probability measure.

The simplex structure of the state space of a pseudo effect algebra
$E$ satisfying  (RDP) entails that the  space of all Jordan measures
on $E$ is an Abelian Dedekind complete $\ell$-group, \cite[Thm 3.5,
Thm 3.6]{Dvu3}. This new fact is our basic tool to present   a
Yosida--Hewitt type and a Lebesgue type of decompositions of
finitely additive measures on $E.$ The property (RDP) as we show is
a sufficient condition to prove not only the existence but also the
uniqueness of such a decomposition that is a main goal of the
present paper.

The paper is organized as follows.

Section 2 is an introduction to the theory of pseudo effect algebras
gathering the necessary latest results.  Section 3 describes the
faces of the state space of pseudo effect algebras and it gives a
general result on a decomposition and it will be applied in
Section 4 to present the main body of the paper~- the Yosida--Hewitt
type of decomposition and the Lebesque type decomposition.

\section{Elements of Pseudo Effect Algebras}

Pseudo effect algebras were introduced in \cite{DvVe1, DvVe2}. We
say that a {\it pseudo effect algebra} is a partial algebra  $(E; +,
0, 1)$, where $+$ is a partial binary operation and $0$ and $1$ are
constants, such that for all $a, b, c \in E$, the following holds

\begin{enumerate}
\item[(i)] $a+b$ and $(a+b)+c$ exist if and only if $b+c$ and
$a+(b+c)$ exist, and in this case $(a+b)+c = a+(b+c)$;

\item[(ii)]
  there is exactly one $d \in E$ and
exactly one $e \in E$ such that $a+d = e+a = 1$;

\item[(iii)]
 if $a+b$ exists, there are elements $d, e
\in E$ such that $a+b = d+a = b+e$;

\item[(iv)] if $1+a$ or $a+1$ exists, then $a = 0$.
\end{enumerate}

If we define $a \le b$ if and only if there exists an element $c\in
E$ such that $a+c =b,$ then $\le$ is a partial ordering on $E$ such
that $0 \le a \le 1$ for any $a \in E.$ It is possible to show that
$a \le b$ if and only if $b = a+c = d+a$ for some $c,d \in E$. We
write $c = a \minusre b$ and $d = b \minusli a.$ Then

$$ (b \minusli a) + a = a + (a \minusre b) = b,
$$
and we write $a^- = 1 \minusli a$ and $a^\sim = a\minusre 1$ for any
$a \in E.$

For basic properties of pseudo effect algebras see \cite{DvVe1,
DvVe2}. We recall that if $+$ is commutative, $E$ is said to be an
{\it effect algebra}; for a guide overview on effect
algebras we recommend e.g. \cite{DvPu}.

For example, let $(G,u)$ be a unital po-group (= partially ordered
group) with strong unit $u$ that is not necessarily Abelian.  We
recall that a po-group (= partially ordered group) is a group with a
partial ordering $\le$ such that if $a\le b,$ then $x+a+y\le x+b+y$
for all $x,y \in G;$ and an element $u \in G^+:=\{g\in G: g\ge 0\}$
is said to be a {\it strong unit} if given $G = \bigcup_n[-nu,nu].$

Then for $(G,u)$ we define $\Gamma(G,u)=[0,u]$ and we endow it with
$+$ that is the restriction of the group addition to the set of all
those $(x,y)\in \Gamma(G,u)\times \Gamma(G,u)$ that $x\le u-y.$ Then
$(\Gamma(G,u);+,0,u)$ is a pseudo effect algebra with possible two
negations: $a^-=u-a$ and $a^\sim = -a+u.$ Any pseudo effect algebra
of the form $\Gamma(G,u)$ is said to be an {\it interval pseudo
effect algebra}. In \cite{DvVe1, DvVe2}, we have some sufficient
conditions posed to a pseudo effect algebra to be an interval.  They
are analogues  of the Riesz Decomposition Properties. Roughly
speaking they are a weaker form of distributivity that allows a
joint refinement of two partitions of $1.$ This is a reason why they
fail to hold for $\mathcal L(H)$ and $\mathcal E(H).$

Now we introduce according to \cite{DvVe1}  the following types of
the Riesz Decomposition properties for pseudo effect algebras that
in the case of effect algebras may coincide, but not for pseudo
effect algebras, in general.

\begin{enumerate}
 \item[(a)] For $a, b \in E$, we write $a \ \mbox{\bf com}\ b$ to mean that
for all $a_1 \leq a$ and $b_1 \leq b$, $a_1$ and $b_1$ commute.

\item[(b)] We say that $E$ fulfils the {\it Riesz
Interpolation Property}, (RIP) for short, if for any $a_1, a_2, b_1,
b_2 \in E$ such that $a_1, a_2 \,\leq\, b_1, b_2$ there is a $c \in
E$ such that $a_1, a_2 \,\leq\, c \,\leq\, b_1, b_2$.

 \item[(c)] We say that $E$ fulfils the {\it weak Riesz
Decomposition Property}, (RDP$_0$) for short, if for any $a, b_1,
b_2 \in E$ such that $a \leq b_1+b_2$ there are $d_1, d_2 \in E$
such that $d_1 \leq b_1$, $\;d_2 \leq b_2$ and $a = d_1+d_2$.

 \item[(d)] We say that $E$ fulfils the {\it Riesz
Decomposition Property}, (RDP) for short, if for any $a_1, a_2, b_1,
b_2 \in E$ such that $a_1+a_2 = b_1+b_2$ there are $d_1, d_2, d_3,
d_4 \in E$ such that $d_1+d_2 = a_1$, $\,d_3+d_4 = a_2$, $\,d_1+d_3
= b_1$, $\,d_2+d_4 = b_2$.

 \item[(e)] We say that $E$ fulfils the {\it
commutational Riesz Decomposition Property}, (RDP$_1$) for short, if
for any $a_1, a_2, b_1, b_2 \in E$ such that $a_1+a_2 = b_1+b_2$
there are $d_1, d_2, d_3, d_4 \in E$ such that (i) $d_1+d_2 = a_1$,
$\,d_3+d_4 = a_2$, $\,d_1+d_3 = b_1$, $\,d_2+d_4 = b_2$, and (ii)
$d_2 \ \mbox{\bf com}\ d_3$.

 \item[(f)] We say that $E$ fulfils the {\it strong
Riesz Decomposition Property}, (RDP$_2$) for short, if for any $a_1,
a_2, b_1, b_2 \in E$ such that $a_1+a_2 = b_1+b_2$ there are $d_1,
d_2, d_3, d_4 \in E$ such that (i) $d_1+d_2 = a_1$, $\,d_3+d_4 =
a_2$, $\,d_1+d_3 = b_1$, $\,d_2+d_4 = b_2$, and (ii) $d_2 \wedge d_3
= 0$.
\end{enumerate}

We have the implications

\centerline{ (RDP$_2$) $\Rightarrow$ (RDP$_1$) $\Rightarrow$ (RDP)
$\Rightarrow$\ (RDP$_0$) $\Rightarrow$ (RIP). }

The converse of any of these implications does not hold. For
commutative effect algebras we have

\centerline{ (RDP$_2$) $\Rightarrow$ (RDP$_1$) $\Leftrightarrow$
(RDP) $\Leftrightarrow$\ (RDP$_0$) $\Rightarrow$ (RIP). }

If in the above definitions of (RDP)'s we change $E$ to $G^+,$ we
have po-groups with the corresponding forms of the Riesz
Decomposition Properties. According to \cite[Thm 5.7]{DvVe2}, every
pseudo effect algebra with (RDP)$_1$ is an interval in some unital
po-group with (RDP)$_1,$ so is any effect algebra with (RDP) in an
Abelian po-group with (RDP), see see \cite{Rav} or \cite[Thm
1.7.17]{DvPu}. Any effect algebra with (RDP) that is a lattice, or
equivalently, with (RDP)$_2$ is an MV-algebra, for a definition see
e.g. \cite{DvPu}. Any pseudo effect algebra with (RDP)$_2$ is a
so-called pseudo MV-algebra, see \cite{GeIo}, and the group $G$ is
an $\ell$-group (= lattice ordered group).

A {\it signed measure} on a pseudo effect algebra $E$ is any mapping
$m:E \to  \mathbb R$ such that $m(a+b)=m(a)+m(b)$ provided $a+b$ is
defined in $E.$ We have $s(0)=0$ and $s(a^-)=s(a^\sim).$ If a signed
measure $m$ is positive, i.e., $m(a)\ge 0$ for each $a \in E,$ we
call it a {\it measure}, and any normalized measure, i.e. a measure
$s$ such that $s(1)=1,$ is said to be a {\it state}. For any measure
$m,$ we have $m(a)\le m(b)$ whenever $a\le b.$ If $m_1$ and $m_2$
are two measures on $E$, then the signed measure $m=m_1-m_2$ is said
to be a {\it Jordan signed measure.} We denote by $\mathcal M(E),$
$\mathcal M^+(E),$ $\mathcal S(E),$ and $\mathcal J(E)$ the sets of
all signed measures, or measures, or states or Jordan signed
measures on $E.$ We recall that it can happen that $\mathcal
M(E)=\{0\}=\mathcal J(E).$

The set $\mathcal S(E)$ is convex, i.e., any convex combination
$s=\lambda s_1 +(1-\lambda)s_2,$ $\lambda \in [0,1],$ of two states
$s_1$ and $s_2$ and $\lambda \in [0,1]$ is a state. If $s$ cannot be
expressed by a convex combination of two distinct states, it is
called an {\it extremal state}. Let $\partial_e \mathcal S(E)$
denote the set of all extremal states. On $\mathcal M(E)$ we
introduce a {\it weak topology}: We say that a net of measures
$\{m_\alpha\}$ {\it converges weakly} to a measure $m$ if
$\lim_\alpha m_\alpha(a)=m(a).$ Then $\mathcal S(E)$ is a convex
compact Hausdorff space, and due to the Krein--Mil'man Theorem, see
\cite[Thm 5.17]{Goo}, every state on $E$ is a weak limit of a net of
convex combinations of extremal states.

If $E$ is a pseudo effect algebra with (RDP), then $\mathcal S(E)$
is either empty or a non-void simplex, \cite[Thm 5.1]{Dvu3}, for
definition of a simplex and its basic properties, see \cite[Chap
10]{Goo}.

For two signed measures $m_1$ and $m_2,$ we write $m_1\le^+ m_2$ if
$m_1(a)\le m_2(a)$ for each $a\in E.$

The following important statement was proved in \cite[Thm 3.5, Thm
3.6]{Dvu3}:

\begin{theorem}\label{th:2.1}
Let $E$ be a pseudo effect algebra with {\rm (RDP)}. Then $\mathcal
J(E)$ is an Abelian Dedekind complete $\ell$-group such that if
$\{m_i\}_{i\in I}$ is a nonempty system of $\mathcal J(E)$ that is
bounded above, and if $d(x)=\bigvee_i m_i(x)$ for all $x \in E,$
then
$$ \left(\bigvee_i m_i\right)(x) = \bigvee\{d(x_1)+\cdots + d(x_n):
x= x_1+\cdots + x_n, \ x_1,\ldots, x_n \in E\}
$$
for all $x \in E.$

And if $e(x)=\bigwedge_i f_i(x)$ for all $x
\in E,$ then
$$ \left(\bigwedge_i m_i\right)(x) = \bigwedge\{e(x_1)+\cdots + e(x_n):
x= x_1+\cdots + x_n, \ x_1,\ldots, x_n \in E\}
$$
for all $x \in E.$

Given $m_1,\ldots, m_n \in \mathcal J(E)$,

\begin{eqnarray*}
\left( \bigvee_{i=1}^n m_i\right)(x) = \sup\{m_1(x_1)+\cdots
+m_n(x_n):
x = x_1+\cdots +x_n,\ x_1,\ldots, x_n \in E\},\\
\left( \bigwedge_{i=1}^n m_i \right)(x) = \inf\{m_1(x_1)+\cdots
+m_n(x_n):
x = x_1+\cdots +x_n,\ x_1,\ldots, x_n \in E\},\\
\end{eqnarray*}
for all $x \in E.$
\end{theorem}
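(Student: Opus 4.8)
The plan is to reduce the whole statement to the construction of finite and arbitrary suprema, the infima being entirely dual. First I would record that $\mathcal{J}(E)$ is an Abelian po-group under pointwise addition with the order $\le^+$: sums and differences of Jordan signed measures are Jordan, the addition is commutative because the values lie in $\mathbb{R}$, and $\le^+$ is translation invariant. The candidate for the binary join of $m_1,m_2\in\mathcal{J}(E)$ is
$$ M(x):=\sup\{m_1(x_1)+m_2(x_2): x=x_1+x_2,\ x_1,x_2\in E\}. $$
Writing $m_i=p_i-q_i$ with $p_i,q_i\in\mathcal{M}^+(E)$ shows $m_1(x_1)+m_2(x_2)\le p_1(1)+p_2(1)$, so the supremum is finite; the trivial splittings $x=x+0=0+x$ give $M\ge^+ m_1,m_2$, and $M(0)=0$ because $0=x_1+x_2$ forces $x_1=x_2=0$. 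The least-upper-bound property is immediate: if $m\ge^+ m_1,m_2$ then $m(x)=m(x_1)+m(x_2)\ge m_1(x_1)+m_2(x_2)$ for every splitting, whence $m\ge^+ M$. That $M\in\mathcal{J}(E)$ follows once $M$ is additive, since then $M+q_1$ is positive and additive, i.e.\ a measure, and $M=(M+q_1)-q_1$.

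The heart of the argument is the additivity $M(a+b)=M(a)+M(b)$ whenever $a+b$ is defined. For $\le$, take any splitting $a+b=c_1+c_2$. Applying {\rm (RDP)} to $c_1+c_2=a+b$ yields $d_{11},d_{12},d_{21},d_{22}$ with $c_1=d_{11}+d_{12}$, $c_2=d_{21}+d_{22}$, $a=d_{11}+d_{21}$, $b=d_{12}+d_{22}$. Then $m_1(c_1)+m_2(c_2)=(m_1(d_{11})+m_2(d_{21}))+(m_1(d_{12})+m_2(d_{22}))\le M(a)+M(b)$, and taking the supremum over $c_1,c_2$ gives $M(a+b)\le M(a)+M(b)$. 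The direction $\ge$ is where non-commutativity bites: given splittings $a=a_1+a_2$ and $b=b_1+b_2$, the element $t:=a_2+b_1$ is defined (it is a subsum of $a+b$ by associativity (i)), and axiom (iii) provides $d$ with $d+a_2=a_2+b_1=t$. Since the values are real, $m(d)+m(a_2)=m(a_2)+m(b_1)$ forces $m(d)=m(b_1)$ for every signed measure $m$. Regrouping
$$ a+b=a_1+(a_2+b_1)+b_2=a_1+(d+a_2)+b_2=(a_1+d)+(a_2+b_2), $$
a genuine two-part splitting of $a+b$ again by (i), we obtain $M(a+b)\ge m_1(a_1+d)+m_2(a_2+b_2)=m_1(a_1)+m_1(b_1)+m_2(a_2)+m_2(b_2)$; taking suprema over the two splittings yields $M(a+b)\ge M(a)+M(b)$. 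Thus $M=m_1\vee m_2$, and induction on $n$ gives the finite formula, so $\mathcal{J}(E)$ is an Abelian $\ell$-group.

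For Dedekind completeness, let $\{m_i\}_{i\in I}$ be nonempty and bounded above by $m_0\in\mathcal{J}(E)$, put $d(x)=\bigvee_i m_i(x)$ (a finite real supremum, since $m_i(x)\le m_0(x)$), and set
$$ M(x)=\bigvee\Big\{d(x_1)+\cdots+d(x_n): x=x_1+\cdots+x_n,\ n\ge 1\Big\}. $$
Here $d$ is monotone and subadditive, $d(a+b)=\bigvee_i(m_i(a)+m_i(b))\le d(a)+d(b)$, and $\sum_j d(x_j)\le\sum_j m_0(x_j)=m_0(x)$, so $M$ is finite and $M\le^+ m_0$. Additivity of $M$ splits once more: for $\ge$ one concatenates an $n$-part splitting of $a$ with a $p$-part splitting of $b$ to form an $(n+p)$-part splitting of $a+b$; for $\le$ one refines a splitting $a+b=x_1+\cdots+x_n$ against the two-part splitting $a+b$, a refinement furnished by iterating {\rm (RDP)}, to write $x_j=y_j+z_j$ with $a=\sum_j y_j$ and $b=\sum_j z_j$, whence $\sum_j d(x_j)\le\sum_j(d(y_j)+d(z_j))\le M(a)+M(b)$ by subadditivity. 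As before $M\in\mathcal{J}(E)$ (it is sandwiched between some $m_{i_0}$ and $m_0$), $M\ge^+ m_i$ for all $i$, and any upper bound $m$ satisfies $m(x)=\sum_j m(x_j)\ge\sum_j d(x_j)$, so $m\ge^+ M$. Hence $M=\bigvee_i m_i$ and $\mathcal{J}(E)$ is Dedekind complete; the meet formulas follow by applying the supremum results to $\{-m_i\}$. The step I expect to be the genuine obstacle is precisely the lower bound $M(a+b)\ge M(a)+M(b)$ in the finite case: in a commutative structure one would simply set $c_i=a_i+b_i$, but here that regrouping need not be defined, and the remedy is the conjugation supplied by axiom (iii) together with the observation that real-valued measures cannot distinguish an element from its conjugate. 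Everything else reduces to the refinement power of {\rm (RDP)} and routine monotonicity and sandwiching.
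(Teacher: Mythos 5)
The paper does not actually prove Theorem \ref{th:2.1}; it imports it verbatim from \cite[Thm 3.5, Thm 3.6]{Dvu3}, so there is no internal proof to compare against. Your argument is correct and is essentially the standard one behind that cited result (the splitting-formula construction of suprema, as in Goodearl's treatment of interpolation po-groups, adapted to the noncommutative setting): the candidate join $M(x)=\sup\{m_1(x_1)+m_2(x_2):x=x_1+x_2\}$, subadditivity via the (RDP) refinement, superadditivity via the conjugation $d+a_2=a_2+b_1$ from axiom (iii) together with the fact that real-valued measures cannot separate an element from its conjugate, and the $n$-versus-$2$ refinement (a routine induction from (RDP)) for the Dedekind-complete case. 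The only steps you leave implicit --- generalized associativity of multiple sums and the iterated Riesz refinement --- are standard facts from \cite{DvVe1}, so nothing essential is missing.
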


\section{Faces of the State Space}

The present section describes the faces of the state space of a
pseudo effect algebra with (RDP). Since our state space is a
simplex, we know that if $F$ is a closed face, then every simplex is
a direct convex sum of $F$ and its complementary face, \cite[Thm
11.28]{Goo}.  However, not every face is closed, we present a
general decomposition, Theorem \ref{th:3.4}, where a weaker form of
the closedness, $\vee$-closedness, allows to obtain a unique
decomposition of measures.  This result will apply in the next
section  to obtain Yosida--Hewitt and Lebesgue types of
decomposition.

A {\it face} of a convex set $K$ is a convex subset $F$ of $K$ such
that if $x= \lambda x_1+(1-\lambda)x_2 \in F$ for $\lambda \in
(0,1),$ then also $x_1,x_2 \in F.$ We note that if $x \in K,$ then
$\{x\}$ is a face iff $x \in
\partial_e K.$

For any $X \subseteq K,$ there is the face generated by $X.$ Due to
\cite[Prop 5.7]{Goo}, the face $F$ generated by $X$ is the set of
those points $x \in K$ for which there exists a positive convex
combination $\lambda x + (1-\lambda) y= z$ with $y \in K$ and $z$
belongs to the convex hull of $X.$

If $\mathcal S(E)\ne \emptyset,$ then a state $s\in \mathcal S(E)$
belongs to the face generated by $X$ if and only if $s\le ^+ \alpha
t$ for some positive constant $\alpha$ and some state $t$ in the
convex hull of $X.$ In particular, the face of $\mathcal S(E)$
generated by a state $s$ is the set of states $s' \in \mathcal S(E)$
such that $s'\le^+ \alpha s$ for some real number $\alpha> 0.$

Let $s$ be a state on a pseudo effect algebra $E.$ The {\it kernel}
of $s$ is the set

$$\Ker(s):=\{x \in E: s(x)=0\}.$$
Then $\Ker(s)$ is a normal ideal of $E.$  We note that a subset $I$
of a pseudo effect algebra is an {\it ideal} if (i) $0\in I,$ (ii)
$a,b \in I$ and $a+b\in E$ imply $a+b \in I,$ and (iii) $a\in E,$ $b
\in I$ and $a\le b$ entail $a\in I.$ An ideal $I$ is {\it normal},
if $a+I:=\{a+b \in E: b \in I\}= I+a :=\{b+a\in E: b \in I\}$ for
any $a \in E.$

\begin{proposition}\label{pr:3.1}  Let $E$ be a pseudo effect
algebra and let $X$ be a subset of $E.$  Then the set

$$F=\{s \in \mathcal S(E): X \subseteq \Ker(s)\}$$
is a closed face of $\mathcal S(E).$
\end{proposition}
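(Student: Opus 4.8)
The plan is to verify the three defining properties in turn: that $F$ is convex, that it is a face, and that it is closed in the weak topology. All three rest on the fact, recorded above, that every state is a positive measure, so that $s(x)\ge 0$ for all $s\in\mathcal S(E)$ and all $x\in E$, together with the fact that evaluation at a fixed element is weakly continuous.

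First I would dispose of convexity. If $s_1,s_2\in F$ and $\lambda\in[0,1]$, then for every $x\in X$ the state $s=\lambda s_1+(1-\lambda)s_2$ satisfies $s(x)=\lambda s_1(x)+(1-\lambda)s_2(x)=0$, so $X\subseteq\Ker(s)$ and $s\in F$. (If $F=\emptyset$ there is nothing to prove, and the empty set is trivially a closed face.)

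The crux is the face property. Suppose $s=\lambda s_1+(1-\lambda)s_2\in F$ with $s_1,s_2\in\mathcal S(E)$ and $\lambda\in(0,1)$; I must show $s_1,s_2\in F$. Fix $x\in X$. Then $0=s(x)=\lambda s_1(x)+(1-\lambda)s_2(x)$. Here both coefficients are strictly positive and, since $s_1,s_2$ are measures, $s_1(x)\ge 0$ and $s_2(x)\ge 0$. A sum of two nonnegative reals with positive weights can vanish only if each summand vanishes, so $s_1(x)=s_2(x)=0$. As $x\in X$ was arbitrary, $X\subseteq\Ker(s_1)\cap\Ker(s_2)$, whence $s_1,s_2\in F$.

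Finally, for closedness I would use that $F=\bigcap_{x\in X}\{s\in\mathcal S(E):s(x)=0\}$. By the definition of the weak topology, for each fixed $x\in E$ the evaluation map $s\mapsto s(x)$ is continuous, so each set $\{s:s(x)=0\}$ is the preimage of the closed set $\{0\}$ and hence weakly closed; equivalently, if a net $s_\alpha\to s$ weakly with each $s_\alpha\in F$, then $s(x)=\lim_\alpha s_\alpha(x)=0$ for every $x\in X$, so $s\in F$. An arbitrary intersection of closed sets is closed, and therefore $F$ is a closed subset of $\mathcal S(E)$. There is no serious obstacle here; the only point requiring genuine care is the positivity input used in the face step, which is precisely what would fail for arbitrary signed measures and so explains why the statement is restricted to states.
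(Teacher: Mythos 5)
Your proof is correct and follows essentially the same route as the paper's: convexity by direct computation, the face property via positivity of states (a positively weighted sum of nonnegative reals vanishes only if each term does), and closedness via weak convergence of nets. If anything, your convexity step is stated more cleanly than the paper's, which invokes the kernel inclusion $\Ker(s)\subseteq\Ker(s_1)\cap\Ker(s_2)$ where the direct computation you give is what is actually needed.
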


\begin{proof}  If  $F=\emptyset,$ then $F$ is trivially a closed
face.  Assume $F\ne \emptyset.$  If $s = \lambda s_1
+(1-\lambda)s_2$ for $\lambda \in  [0,1]$ and for two states $s_1$
and $s_2$, then $\Ker(s)\subseteq \Ker(s_1)\cap \Ker(s_2)$ which
proves that $F$ is a convex set. If $\{s_\alpha\}$ is a net of
states from $F$ that converges weakly to a state $s$ on $E$, then
for each $x\in X$, $0=\lim_\alpha s_\alpha (x)=s(x)$ so that $F$ is
closed. If now $s = \lambda s_1 +(1-\lambda)s_2 \in F$ for $\lambda
\in (0,1)$ and $s_1,s_2 \in F,$ then for each $x\in X$ we have
$0=s(x) = \lambda s_1(x) +(1-\lambda)s_2(x)$ so that
$s_1(x)=s_2(x)=0$ and therefore, $s_1,s_2 \in F.$
\end{proof}

Let $F$ be a face of a simplex $K$, and let $F'$ be the union of
those faces of $K$ that are disjoint from $F.$  According to
\cite[Prop 10.12]{Goo}, $F'$ is a face of $K$ and it is the largest
face of $K$ that is disjoint from $F.$ If a point $x \in K$ can be
expressed as convex combinations
$$x = \alpha_1 x_1 +\alpha_2 x_2 = \beta_1 y_1 + \beta_2 y_2
$$
with $x_1,y_1 \in F$ and $x_2,y_2\in F',$ then $\alpha_i =\beta_i$
for $i=1,2$ and $x_i=y_i$ for those $i$ such that $\alpha_i >0.$ The
face  $F'$ is said to be a {\it complementary face} of $F$ in $K.$

It is possible to show that if $F_1\subseteq F_2$ are two faces of
$\mathcal S(E),$ then $F_2'\subseteq F_1'$ and for a set $\{F_i\}$
of faces, we have $F=\bigcap_i F_i$  is a face, and

$$ \left(\bigcap_i F_i\right)' = \bigcup_i F_i'\eqno(3.0).$$

\begin{proposition}\label{pr:3.2}  Let $F$ be a  face of the
state space of a pseudo effect algebra $E$ with {\rm (RDP)} and let
$\mathcal S(E)\ne \emptyset.$ Then its complementary face $F'$
consists of all states $s' \in \mathcal S(E)$ such that $s'\wedge s
= 0$ for every state $s \in F.$

Equivalently, $F'$ consists of all states $s' \in \mathcal S(E)$
such that if $s \in F$ is such that $\alpha s \le^+s'$ for some
constant $\alpha \ge 0,$ then $\alpha = 0.$
\end{proposition}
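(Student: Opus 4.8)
The plan is to characterize the complementary face $F'$ using the $\ell$-group structure on $\mathcal{J}(E)$ provided by Theorem~\ref{th:2.1}. The key observation is that the lattice operation $s' \wedge s = 0$ encodes disjointness of the faces generated by $s'$ and $s$, and this is exactly what we need to match with the geometric definition of $F'$ as the union of faces disjoint from $F$. First I would recall that by the remarks following Proposition~\ref{pr:3.1}, the face generated by a single state $s$ is $\{t \in \mathcal{S}(E): t \le^+ \alpha s \text{ for some } \alpha > 0\}$, and that a face $F$ is the union (equivalently, upward-directed union under (RDP), since the state space is a simplex) of the faces generated by its members.

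Next I would establish the equivalence of the two stated descriptions of $F'$ independently of the geometry, working purely in the $\ell$-group $\mathcal{J}(E)$. Here one shows that for states $s,s'$, the meet $s' \wedge s = 0$ holds if and only if there is no nonzero positive multiple of $s$ lying below $s'$, i.e. $\alpha s \le^+ s'$ forces $\alpha = 0$. The forward direction is immediate: if $\alpha s \le^+ s'$ with $\alpha > 0$ then $\alpha s \le^+ s' \wedge (\alpha s) \le s' \wedge s$ suitably scaled, contradicting $s'\wedge s = 0$. The converse uses that in an $\ell$-group, if $s' \wedge s =: g > 0$ then $g$ is a nonzero positive Jordan measure dominated by both $s$ and $s'$; one then needs to extract from $g$ a genuine multiple $\alpha s$ with $\alpha > 0$ below $s'$, which is where the simplex structure is essential.

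The main obstacle will be this last extraction step: passing from ``$s' \wedge s > 0$ in the $\ell$-group'' to ``$\alpha s \le^+ s'$ for some $\alpha > 0$''. In a general $\ell$-group the meet need not be a scalar multiple of $s$, so I expect to invoke the simplex structure of $\mathcal{S}(E)$ directly, via the theory of \cite[Chap 10, Prop 10.12]{Goo}. Concretely, I would use that in the base-normalized positive cone of $\mathcal{J}(E)$ whose base is the simplex $\mathcal{S}(E)$, the extremal rays correspond to extremal states, and the order-ideal generated by $s$ (equivalently, the face generated by $s$) is characterized precisely by domination $\le^+ \alpha s$. Thus $s' \wedge s = 0$ says $s'$ lies in no face meeting the face generated by $s$, which after taking the union over all $s \in F$ is exactly the condition that $s'$ lies in a face disjoint from $F$.

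Finally I would assemble the identification of $F'$. Using equation $(3.0)$ and the description of $F$ as the union of faces generated by its states, I would show that the largest face disjoint from $F$ consists of those $s'$ disjoint (in the sense $s'\wedge s =0$) from every $s \in F$: a state $s'$ lies in a face disjoint from $F$ iff the face generated by $s'$ meets no face generated by any $s \in F$, iff $s' \wedge s = 0$ for all $s \in F$. The second description then follows from the $\ell$-group equivalence established above applied uniformly over $s \in F$. I would take care that all of this is only asserted under $\mathcal{S}(E) \ne \emptyset$ and (RDP), which guarantee both the simplex structure and the $\ell$-group structure that make the meet operation meaningful.
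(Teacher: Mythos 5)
The weak link is your claimed pointwise equivalence in the $\ell$-group $\mathcal J(E)$: that for two individual states $s,s'$ one has $s'\wedge s=0$ if and only if $\alpha s\le^+ s'$ forces $\alpha=0$. Only the forward direction is true. The converse is false, and no appeal to the simplex structure of $\mathcal S(E)$ can rescue it: let $\mathcal S(E)$ be a triangle with extreme points $e_1,e_2,e_3$ (e.g.\ $E$ the Boolean algebra $2^{3}$), and put $s=\tfrac12 e_1+\tfrac12 e_2$, $s'=\tfrac12 e_2+\tfrac12 e_3$. Then no positive multiple of $s$ lies below $s'$ (the $e_1$-coordinate obstructs it), yet $s'\wedge s=\tfrac12 e_2>0$. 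So the ``extraction step'' you yourself flag as the main obstacle --- producing $\alpha>0$ with $\alpha s\le^+ s'$ from $s'\wedge s>0$ \emph{for the same} $s$ --- is aimed at proving a false statement, and your final assembly (``the second description then follows from the $\ell$-group equivalence \dots applied uniformly over $s\in F$'') inherits the gap.

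The proposition is nevertheless true because the quantifier over $s\in F$ must enter the argument together with the hypothesis that $F$ is a face. If $s'\wedge s=:g>0$ for some $s\in F$, normalize: $s'':=g/g(1)$ is a state with $s''\le^+ g(1)^{-1}s$, so $s''$ lies in the face generated by $s$ and hence in $F$; moreover $g(1)\,s''=g\le^+ s'$ with $g(1)>0$. Thus a positive multiple of \emph{some} element of $F$ (not of the original $s$) sits below $s'$, which is exactly what violates membership in the second set. In the triangle example this is visible: once $s\in F$ and $F$ is a face, $e_2\in F$ and $\tfrac12 e_2\le^+ s'$. With this repair the remainder of your outline --- identifying $F'$ with $\{s'\in\mathcal S(E): s'\wedge s=0 \mbox{ for all } s\in F\}$ via disjointness of the generated faces and $F=\bigcup_{s\in F}F(s)$ --- is sound and is essentially the route the paper takes, which invokes \cite[Prop 4.2]{Dvu3} for the equivalence between $s'\wedge s=0$ and disjointness of the faces generated by $s'$ and $s$.
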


\begin{proof}

We know that $\mathcal S(E)$ is a simplex. According to \cite[Prop
10.12]{Goo},  $F'$ is the union of all faces of $\mathcal S(E)$ that
are disjoint from $F.$ If $s' \in F'$ and $s \in F,$ and if $s'$ and
$s$ belong to mutually disjoint faces, then $s'\wedge s=0,$ see
\cite[Prop 4.2]{Dvu3}. Conversely, let $s'$ be a state on $E$ such that $s'\wedge
s=0$ for each $s \in F.$ Then by \cite[Prop 4.2]{Dvu3}, $s'$ and $s$ belong to
mutually disjoint faces. Let $F(s')$ and $F(s)$ be the faces
generated by $s'$ and $s$.  Therefore, $F(s')\cap F(s)=\emptyset.$
But $F = \bigcup_{s\in F}F(s),$ so that $F(s')\cap F = \bigcup_{s\in
F}F(s')\cap F(s)=\emptyset$ that gives $s'\in F'$

Now let $s' \in F'$ and let $s\in F$ be such that $\alpha s\le s'$
for some $\alpha \ge 0.$  If $\alpha >0,$ then $s\le s'/\alpha$ that
$s$ belongs to the face generated by $s'$ that is impossible, whence
$\alpha =0.$

Assume that $F_0$ is the set of all those states $s'$ on $E$ such
that if $\alpha s\le^+ s'$ for $s\in F$ and $\alpha \ge 0,$ then
$\alpha =0.$ We have seen that $F' \subseteq F_0.$ Let $s'\in F_0$
and $s\in F$  If $s' \wedge s>^+0,$ then  $s$ belongs to the face
generated by $s'$, hence, $s\le^+ ts'$ for some $t>0.$ This gives
$s/t \le^+ s$ so that $1/t=0$ that is absurd. Hence, $F_0 \subseteq
F'.$
\end{proof}

Let $K_1,\ldots,K_n$ be convex subsets of $K.$ We say that $K$ is
the {\it direct convex sum} of $K_1,\ldots,K_n$ if (i) $K$ equals
the convex hull of $\bigcup_{i=1}^n K_i$ and every element $x\in K$
can be uniquely expressed as a convex combination of some elements
$x_i\in K_i,$ $i=1,\ldots,n.$  That is, if

$$ \alpha_1x_1+\cdots+\alpha_n x_n = \beta_1 y_1+\cdots+ \beta_n
y_n$$ are convex combinations of $x_i,y_i \in K_i,$ for
$i=1,\ldots,n,$ then $\alpha_i =\beta_i $ for all $i$'s and
$x_i=y_j$ for those $i$ such that $\alpha_i >0.$

\begin{theorem}\label{th:3.3}  Let $E$  be a pseudo effect algebra
with {\rm (RDP)} and let $\mathcal S(E)\ne \emptyset.$ Let $I$ be a
normal ideal of $E,$ and set

$$ F =\{s\in \mathcal S(E): I\subseteq \Ker(s)\}.
$$
Then $F$ is a closed face of $\mathcal S(E),$ and $\mathcal S(E)$ is
the direct convex sum of $F$ and its complementary face.
\end{theorem}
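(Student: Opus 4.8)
The plan is to observe that both parts of the statement are essentially immediate consequences of results already in place. First I would dispose of the face property and closedness of $F$ by a direct appeal to Proposition \ref{pr:3.1}. Indeed, taking $X = I$ in that proposition, the set $F = \{s \in \mathcal{S}(E): I \subseteq \Ker(s)\}$ is precisely the set shown there to be a closed face of $\mathcal{S}(E)$. I would also remark that neither the ideal structure nor the normality of $I$ is actually needed at this stage, since Proposition \ref{pr:3.1} is stated for an arbitrary subset; these hypotheses serve rather to identify $F$ as the face naturally attached to the ideal $I$ (and to make the statement meaningful, since every $\Ker(s)$ is itself a normal ideal).

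The second part rests on the simplex structure of the state space. Since $\mathcal{S}(E) \neq \emptyset$, by \cite[Thm 5.1]{Dvu3} the state space is a (Choquet) simplex. I would then invoke the general decomposition theorem for simplices, \cite[Thm 11.28]{Goo}: for any closed face $F$ of a simplex $K$, the simplex $K$ is the direct convex sum of $F$ and its complementary face $F'$. Applying this with $K = \mathcal{S}(E)$ and the closed face $F$ obtained in the first step yields the claimed direct convex sum decomposition of $\mathcal{S}(E)$.

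I expect no genuine obstacle here, as the substantive content is carried by the earlier cited results; the proof is a matter of assembly. The one point deserving a line of verification is that the complementary face $F'$ furnished by \cite[Prop 10.12]{Goo}, and described explicitly in Proposition \ref{pr:3.2} in terms of the condition $s' \wedge s = 0$, coincides with the complement appearing in \cite[Thm 11.28]{Goo}. Since both are defined as the union of all faces of $\mathcal{S}(E)$ disjoint from $F$, they agree by construction, and the theorem follows at once.
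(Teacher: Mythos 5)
Your proof is correct, but it takes a genuinely different route from the paper's. For the decomposition you rely on the topological closedness of $F$ (from Proposition \ref{pr:3.1}) together with the fact that a Choquet simplex is the direct convex sum of any \emph{closed} face and its complementary face, \cite[Thm 11.28]{Goo} --- a fact the paper itself records at the start of Section 3, so the appeal is legitimate and the assembly works. The paper instead argues order-theoretically: it shows that the cone $C=\{\alpha s:\alpha\ge 0,\ s\in F\}$ contains the supremum of every subset bounded above in the Dedekind complete $\ell$-group $\mathcal J(E)$, using the explicit supremum formula of Theorem \ref{th:2.1} together with the observation that a decomposition $x=x_1+\cdots+x_n$ of an element $x\in I$ forces $x_1,\ldots,x_n\in I$, and then invokes \cite[Prop 10.14]{Goo} rather than \cite[Thm 11.28]{Goo}. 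Your route is shorter for this particular theorem; the paper's route is deliberately chosen because it is exactly the $\vee$-closedness technique that is needed for faces that are \emph{not} topologically closed (Theorem \ref{th:3.4} and the Yosida--Hewitt and Lebesgue decompositions of Section 4), where your argument would not apply --- so the proof of Theorem \ref{th:3.3} doubles as a first instance of that method. Your side remark that normality of $I$ is not needed is consistent with the paper, whose proof uses only that $I$ is an ideal (downward closed and closed under $+$), and your version does not even need that much.
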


\begin{proof} By Proposition \ref{pr:3.1}, $F$ is a closed face of
$\mathcal S(E).$ If $F$ is empty, then $F'=\mathcal S(E).$

Now assume $F$ is non-void. Let $\mathcal J(E)$ be the set of all
Jordan signed measures on $E.$ By Theorem \ref{th:2.1}, $\mathcal
J(E)$ is an Abelian Dedekind complete $\ell$-group, and $\mathcal
S(E)$ is a base  for the positive cone $\mathcal J(E)^+=\{\alpha s:
\alpha \ge 0, s \in \mathcal S(E)\}.$ We show that the set

$$C =\{\alpha s: \alpha \ge 0,\ s\in \mathcal S(E),\ s\in F\}
$$
contains  the supremum of any subset of $C$ which is bounded above
in $\mathcal J(E).$  Given a nonempty system of positive measures
$\{m_i\}_i$ from $C$ that is bounded in $\mathcal J(E),$ let $m =
\bigvee_i m_i.$ According to Theorem \ref{th:2.1},
$$m(x) =\sup\{d(x_1)+\cdots+d(x_n): x= x_1+\cdots +x_n\},\quad x \in E,$$
where $d(x)=\sup_i m_i(x),$ $x \in E.$

If $x \in I$, then $x=x_1+\cdots+x_n$ entails $x_1,\ldots, x_n \in
I$ so that $d(x_j)= \sup_i m_i(x_j) =0$ for each $j=1,\ldots, n.$
Hence, $m(x)=0$ and $m\in C.$

According to \cite[Prop 10.14]{Goo}, $\mathcal S(E)$ is the direct
convex sum of $F$ and its complementary face.
\end{proof}

Let $F$ be a face of $\mathcal S(E),$ where $E$ is a pseudo effect
algebra with (RDP). We set
$$V(F):=\{\alpha s: \alpha\ge 0, s\in F\}.
$$
It is a cone that is a subcone of $\mathcal M^+(S),$ i.e., if (i)
$m_1,m_2 \in V(F),$ then $m_1+m_2 \in V(F),$ (ii) $\mathbb R^+(F)
\subseteq V(F)$ (ii) $-V(F)\cap  V(F)=\{0\}.$ We say that $V(F)$ is
$\vee$-{\it closed} if, for any chain $\{m_i\}$ from $V(F)$ bounded
in $\mathcal M^+(E),$ $\bigvee_i m_i \in V(F).$

If $F'$ is the complementary face of $F,$ according to Proposition
\ref{pr:3.2},  $m\in V(F')$ iff for any $t\in V(F)$ with $t\le^+m,$
we have $t=0.$

For an arbitrary cone $V$ of $\mathcal S(E),$ we denote by
$V^\sharp$ the set of all those measures $t\in \mathcal M^+(E)$ that
$m\le^+t$ for $m \in V$ entails $m=0.$ The elements of $V^\sharp$
are said to be $V$-{\it singular measures}. Hence, $V(F)^\sharp =
V(F')$ for any face $F$ of $\mathcal S(E)$ of a pseudo effect
algebra $E$ with (RDP).

\begin{theorem}\label{th:3.4}  Let $F$ be a face of the state space
$\mathcal S(E)$ of a pseudo effect algebra $E$ with {\rm (RDP)}.  If
$V(F)$ is $\vee$-closed, then $\mathcal S(E)$ equals the direct
convex sum of $F$ and its complementary face $F'.$

In addition, every measure $m$ on $E$ can be uniquely decomposed as
a sum
$$m=m_1 +m_1 \eqno(3.1)
$$
of two measures such that $m_1 \in V(F)$ and $m_2 \in V(F').$
\end{theorem}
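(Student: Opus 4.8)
The plan is to reduce everything to one structural fact: the cone $V(F)$ contains the supremum (computed in the Abelian Dedekind complete $\ell$-group $\mathcal{J}(E)$ of Theorem \ref{th:2.1}) of each of its subsets that is bounded above in $\mathcal{M}^+(E)$. Once this is established, the direct convex sum assertion follows from \cite[Prop 10.14]{Goo} exactly as in the proof of Theorem \ref{th:3.3}, and the unique decomposition of a general measure is obtained by normalization.

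First I would record two closure properties of $V(F)$. For heredity, suppose $0\le^+ t\le^+ m$ with $m=\alpha s\in V(F)$, $s\in F$, and $t\in\mathcal{M}^+(E)$. Either $t(1)=0$, which forces $t=0$, or $s':=t/t(1)$ is a state satisfying $s'\le^+(\alpha/t(1))\,s$; by the characterization of the face generated by $s$ recalled before Proposition \ref{pr:3.1}, $s'$ lies in that face, hence in $F$, so $t\in V(F)$. For closure under finite suprema, given $m_1,m_2\in V(F)$, convexity of $F$ gives $m_1+m_2\in V(F)$, and since $m_1\vee m_2\le^+ m_1+m_2$ in the $\ell$-group, heredity yields $m_1\vee m_2\in V(F)$.

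Next I would upgrade $\vee$-closedness from chains to arbitrary bounded families. Given $\{m_i\}\subseteq V(F)$ bounded above, put $m=\bigvee_i m_i$ and $D=\{t\in V(F):t\le^+ m\}$; then $\bigvee D=m$ and, by the finite-supremum property, $D$ is upward directed. A Zorn's lemma argument produces a maximal chain $C^*\subseteq D$, whose supremum $c^*$ lies in $V(F)$ by $\vee$-closedness; if $c^*\ne m$, there is $t\in D$ with $t\not\le^+ c^*$, and then $t\vee c^*\in D$ strictly extends $C^*$, contradicting maximality. Hence $m=c^*\in V(F)$, which is the required closure under bounded suprema, and \cite[Prop 10.14]{Goo} gives that $\mathcal{S}(E)$ is the direct convex sum of $F$ and $F'$.

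Finally, for the decomposition of an arbitrary $m\in\mathcal{M}^+(E)$, I would normalize: for $m\ne 0$ set $\beta=m(1)$ and apply the direct convex sum to $s=m/\beta$, writing $s=\lambda s_1+(1-\lambda)s_2$ with $s_1\in F$, $s_2\in F'$, and take $m_1=\lambda\beta s_1\in V(F)$ and $m_2=(1-\lambda)\beta s_2\in V(F')$. Uniqueness transfers back from the uniqueness clause of the direct convex sum after evaluating any two such decompositions at $1$ and renormalizing the summands. I expect the main obstacle to be precisely the passage from the chain formulation of $\vee$-closedness to closure under all bounded suprema, i.e. the Zorn step, which is exactly where closure under finite suprema (and behind it heredity together with the face characterization) is indispensable.
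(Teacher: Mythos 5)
Your argument is correct, but it reaches the conclusion by a recognizably different route than the paper. The paper applies Zorn's lemma directly to $\Gamma(m)=\{m_1\in V(F): m_1\le^+ m\}$: the chain form of $\vee$-closedness gives upper bounds for chains in $\Gamma(m)$, a maximal element $m_1$ is extracted, and the residual $m_2=m-m_1$ is shown to be $V(F)$-singular (hence in $V(F')$) by the one-line maximality argument ($t\le^+ m_2$ with $t\in V(F)$ gives $m_1+t\in\Gamma(m)$, so $t=0$); the direct convex sum of $\mathcal S(E)$ is then read off from the existence of the decomposition for states together with \cite[Prop 10.12]{Goo}. You instead strengthen the hypothesis first --- upgrading $\vee$-closedness from chains to arbitrary bounded subsets via heredity of $V(F)$ (using the face-generation characterization recalled before Proposition \ref{pr:3.1}), closure under finite suprema ($m_1\vee m_2\le^+ m_1+m_2$), and a maximal-chain argument --- so that \cite[Prop 10.14]{Goo} applies exactly as in Theorem \ref{th:3.3}, and you then recover the decomposition of a general measure by normalization. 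Your route costs two auxiliary lemmas that the paper's construction avoids entirely, but it buys a cleaner reduction to the same Goodearl machinery already used for Theorem \ref{th:3.3} and makes explicit that chain-closedness of $V(F)$ in fact implies closure under all bounded suprema; the paper's route is more economical and produces the decomposing pair $(m_1,m_2)$ constructively from the maximal element. Both proofs are valid, and your normalization step for uniqueness (evaluating at $1$ and invoking the uniqueness clause of the direct convex sum) matches what the paper does implicitly.
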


\begin{proof}
{\it Existence:} Let $m$ be a measure on $E,$ and let $\Gamma(m)=\{m_1
\in V(F): m_1 \le^+ m\}.$ Since the zero measure belongs to
$\Gamma(m),$ $\Gamma(m)$ is non-void. Let $\{m_i\}$ be a chain of
elements from $\Gamma(m).$ The measure $m$  is an upper bound for
$\{m_i\}.$ By Theorem \ref{th:2.1}, $m_0:=\bigvee_i m_i$ is a measure on $E$
and by the hypotheses, $m_0$ belongs to $V(F).$ It follows from the
Zorn's Lemma that $\Gamma(m)$ contains a maximal element $m_1$ such
that $m_1\le^+ m.$

If we set $m_2 = m-m_1,$ we show that $m_2 \in V(F').$  Let $t \in
V(F)$ and let $t\le^+ m_2.$ Then $m_1+ t \le^+ m_1+m_2=m.$  Since
$m_1+t\in V(F),$ the maximality of $m_1$ in $\Gamma(m)$ implies
$t=0$,  and therefore, $m_2 \in V(F).$

\vspace{2mm}\noindent{\it Uniqueness:} Now let  $s$ be an arbitrary
state on $E.$  According to the first part of the present proof, we
see that $s$ can be decomposed in the convex form
$$s=\lambda_1 s_1 +\lambda_2s_2, \eqno(3.2)
$$
where $s_1 \in F$ and $s_2 \in F'.$   This implies that $\mathcal
S(E)$  is a direct convex sum of $F$ and $F'.$  In particular,
according to \cite[Prop 10.12]{Goo}, this yields that the
decomposition (3.2) is unique, i.e. if $s=\alpha_1 s_1'+\alpha_2s_2'
$ is another convex combination of $s_1'\in F$ and $s_2'\in F'$,
then $\alpha_i=\lambda_i$ for $i=1,2,$ and if $\alpha_i>0$ implies
$s_i = s_i'.$ In particular, this implies that  the decomposition in
(3.1) is unique.
\end{proof}

\section{Decomposition of States on Pseudo Effect Algebras}

This section presents the main results of the paper: Yosida--Hewitt
type and Lebesgue type of decomposition of finitely additive
measures and states.

Using closed faces $F$, we can decompose any state $s$ on a pseudo
effect algebra with (RDP) in the unique form: $s= \lambda s_1 +
(1-\lambda)s_2,$ where $s_1 \in F$ and $s_2 \in F',$ where $F'$ is
the complementary face of $F,$ see \cite[Thm 11.28]{Goo}. But not
every face of $\mathcal S(E)$ is closed. E.g. the set of all
$\sigma$-additive states on $E$ is a face that is not necessarily
closed. However, also for some such situations we show that
$\mathcal S(E)$ can be the direct convex sum of the face $F$ and its
complementary face.

A non-empty set $X$ of  a poset $E$ is {\it directed downwards}
({\it directed upwards}), and we write $D\downarrow$ ($D\uparrow$),
if for any $x,y \in X$ there exists $z \in D$ such that $z \le x,$
$z \le y$ ($z \ge x,$  $z \ge y$). Two downwards directed sets
$\{x_t:\ t \in T\}$ and $\{y_t:\ t \in T\}$ indexed by the same
index set $T$ are called {\it downwards equidirected} if, for any
$s,t \in T,$ there exists $v \in T$ such that $x_v \le x_s$ and $x_v
\le x_t$ as well as $y_v \le y_s$ and $y_v \le y_t.$ A similar
definition holds for upwards directed sets.

Let $x \in E$ and $D \subseteq E.$ We say that $D\uparrow x$ if
$D\uparrow$ and $x = \bigvee D.$ Dually we define $D\downarrow x,$
i.e. $D\downarrow$ and $x = \bigwedge D.$

Let $\{a_n\}$ be a sequence of elements of a pseudo effect $E$ such
that $b_n =a_1+\cdots +a_n$ exists in $E$ for each $n\ge 1$ and if
$a =\bigvee_n b_n$ exists in $E$, we write $a = \sum_n a_n.$

A signed measure $m$ on a pseudo effect algebra $E$ is $\sigma$-{\it
additive} if, $\{a_n\} \nearrow a,$ i.e. $a_n\le a_{n+1}$ for each
$n\ge 1$ and $\bigvee_n a_n = a,$ then $m(a)=\lim_n m(a_n).$ A
signed measure $m$ is $\sigma$-additive iff $\{a_n\}\searrow 0$
entails $\lim_nm(a_n)= 0.$

Now let $E$ be an effect algebra (not a pseudo effect algebra). We
say that a system $\{a_t\}_{t \in T}$ of elements of $E$ is {\it
summable} if, for each finite subset $F \subseteq T,$ the element
$a_F =\sum_{t\in F} a_t$ is defined in $E$. If  there exists the
element $a=\bigvee \{a_F: F \subseteq T\},$ we called it the {\it
sum} of the summable system $\{a_t\}_{t \in T},$ and write $a=
\sum_{t \in T} a_t.$

A signed measure $m$ on an effect algebra is said to be {\it
completely additive} if $m(a) = \sum_{t\in T}  m(a_t)$ whenever $a =
\sum_{t\in T} a_t.$

We denote by $\mathcal J(E)_\sigma, \mathcal J(E)_{\rm ca}$ the sets
of all $\sigma$-additive Jordan signed measures and completely
additive Jordan signed measures on $E,$ respectively. In the same
way we define $\mathcal M^+(E)_\sigma$, $\mathcal M^+(E)_{\rm ca}$
and similarly for the states: $\mathcal S(E)_\sigma$ and $\mathcal
S(E)_{\rm ca}$ denote the systems of all $\sigma$-additive and
completely additive states on $E.$

Then $\mathcal S(E)_{\rm ca}\subseteq \mathcal S(E)_\sigma \subseteq
\mathcal S(E).$  Each of these sets can be empty. Moreover,
$\mathcal S(E)_\sigma$ and $\mathcal S(E)_{\rm ca}$ are also faces
of $\mathcal S(E),$ and
$$V(\mathcal S(E)_\sigma) = \mathcal
M^+(E)_\sigma \quad  \mbox{and}  \quad V(\mathcal S(E)_{\rm ca}) =
\mathcal M^+(E)_{\rm ca}. \eqno(4.1)
$$

\begin{proposition}\label{pr:4.1}  Let $m_1,\ldots,m_n$ be
completely additive measures on an effect algebra $E$ that satisfies
{\rm (RDP)}. Then $m=m_1\vee\cdots \vee m_n$ is also a completely
additive measure on $E.$

The same is true if $m_1,\ldots,m_n$ are $\sigma$-additive measures
on a pseudo effect algebra $E$ with {\rm (RDP)}.

\end{proposition}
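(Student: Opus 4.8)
The plan is to reduce everything to the finite supremum formula of Theorem \ref{th:2.1} together with the joint refinement furnished by (RDP), and to push the burden of additivity onto the ordinary (pointwise) sum $M := m_1 + \cdots + m_n$. First I would record that $m = \bigvee_{i=1}^n m_i$ exists and is a positive measure: by Theorem \ref{th:2.1} the group $\mathcal J(E)$ is a Dedekind complete $\ell$-group, so the finite join exists, and since $m \ge^+ m_i \ge^+ 0$ it lies in $\mathcal M^+(E)$. Treating the two cases in parallel, write the given data as $a = \sum_{t \in T} a_t$ (the completely additive case) or $a_k \nearrow a$ (the $\sigma$-additive case), and in each form the remainder $r_F = a_F \minusre a$ with $a_F = \sum_{t \in F} a_t$ (resp. $r_k = a_k \minusre a$), so that $a = a_F + r_F$ and $r_F \downarrow 0$ (resp. $a = a_k + r_k$ and $r_k \downarrow 0$); this last fact holds because $x \mapsto x \minusre a$ reverses order and carries the join $\bigvee_F a_F = a$ to the meet $0$. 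Since $m$ is a positive, hence monotone, measure, the inequality $\sum_t m(a_t) \le m(a)$ (resp. $\lim_k m(a_k) \le m(a)$) is immediate, so only the reverse inequality needs work.

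For the reverse inequality I would fix $\varepsilon > 0$ and use the finite join formula of Theorem \ref{th:2.1} to choose a single decomposition $a = x_1 + \cdots + x_n$ with $m_1(x_1) + \cdots + m_n(x_n) > m(a) - \varepsilon$. Then I apply the refinement guaranteed by (RDP) to the two decompositions $a = a_F + r_F$ and $a = x_1 + \cdots + x_n$: this yields elements $c_{i,t}$ ($t \in F$) and $c_{i,r}$ with $x_i = \sum_{t \in F} c_{i,t} + c_{i,r}$, with $a_t = \sum_i c_{i,t}$ for each $t \in F$, and with $r_F = \sum_i c_{i,r}$. Applying finite additivity of each $m_i$ and regrouping gives $\sum_i m_i(x_i) = \sum_{t \in F}\big(\sum_i m_i(c_{i,t})\big) + \sum_i m_i(c_{i,r})$. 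Here each inner sum $\sum_i m_i(c_{i,t})$ is bounded by $m(a_t)$, because $a_t = c_{1,t} + \cdots + c_{n,t}$ is one of the competitors in the supremum of Theorem \ref{th:2.1}, while each $c_{i,r} \le r_F$ gives $\sum_i m_i(c_{i,r}) \le \sum_i m_i(r_F) = M(r_F)$ by monotonicity.

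Combining these estimates yields $m(a) - \varepsilon < \sum_{t \in F} m(a_t) + M(r_F) \le \sum_{t \in T} m(a_t) + M(r_F)$, and the obvious analogue $m(a) - \varepsilon < m(a_k) + M(r_k)$ in the $\sigma$-additive case. The remaining point is that $M(r_F) \to 0$, and this is where the hypothesis on the $m_i$ enters, but only through $M$: a finite pointwise sum of completely additive (resp. $\sigma$-additive) measures is again completely additive (resp. $\sigma$-additive), since for nonnegative terms the finitely many sums over $i$ and the sum (resp. limit) over $t$ may be interchanged; hence $M(r_F) = M(a) - \sum_{t \in F} M(a_t) \to 0$ (resp. $M(r_k) \to 0$ because $r_k \downarrow 0$). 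Letting $F \uparrow T$ (resp. $k \to \infty$) and then $\varepsilon \to 0$ gives $m(a) \le \sum_t m(a_t)$ (resp. $m(a) \le \lim_k m(a_k)$), which with the easy inequality finishes the proof.

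The step I expect to be the main obstacle is the refinement of the two decompositions in the pseudo effect algebra ($\sigma$-additive) setting: I must ensure that (RDP) really produces a rectangular refinement $\{c_{i,t}\}$ of a short decomposition against the $n$-term one \emph{respecting the order in which the pieces are added}, so that the identities $a_t = c_{1,t}+\cdots+c_{n,t}$ and $r_F = \sum_i c_{i,r}$ hold on the nose (in the commutative effect-algebra case relevant to complete additivity this is just the classical Riesz refinement and causes no trouble). Once this finite joint refinement is in hand, the rest is bookkeeping; the crucial conceptual simplification is that the individual complete/$\sigma$-additivity of the $m_i$ is needed only to know that their pointwise sum $M$ annihilates the shrinking remainders, so the infinite joint refinement one might expect to need is entirely avoided.
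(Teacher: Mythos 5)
Your proof is correct, and it rests on the same two pillars as the paper's: the finite-join formula of Theorem \ref{th:2.1} and the observation that the complete (resp.\ $\sigma$-) additivity of the $m_i$ is needed only through the pointwise sum $M=m_1+\cdots+m_n$, which annihilates the tails. But your execution is more roundabout than necessary, and the step you single out as the main obstacle --- producing an order-respecting rectangular (RDP) refinement of $a=a_F+r_F$ against a near-optimal decomposition $a=x_1+\cdots+x_n$ --- is entirely avoidable. The paper (after reducing to $n=2$) simply applies the supremum formula to the tail itself: $m(a-a_F)=\sup\{m_1(x)+m_2(y): x+y=a-a_F\}$, and since every competitor satisfies $x,y\le a-a_F$, monotonicity gives $m_1(x)+m_2(y)\le m_1(a-a_F)+m_2(a-a_F)<\epsilon$ once $F$ is large enough; hence $m(a)-m(a_F)=m(a-a_F)\le\epsilon$. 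In effect both arguments establish $m\le^{+}M$ evaluated at the remainder, but the paper's route gets there in two lines with no refinement of decompositions, whereas yours reconstructs the same estimate by distributing a decomposition of $a$ over the pieces $a_t$ and $r_F$. What your version buys is a slightly more explicit accounting of where each $m_i$'s mass goes (each inner sum $\sum_i m_i(c_{i,t})$ is a competitor for $m(a_t)$), and it keeps general $n$ rather than reducing to $n=2$; what it costs is the dependence on the iterated RDP refinement in the noncommutative setting, which, while true (it is the same fact underlying the proof of Theorem \ref{th:2.1} itself), is machinery the statement does not require.
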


\begin{proof}  Without loss of generality, we can assume that $n=2.$
Let $a = \sum_{t\in T}a_t$ and let $F$ be any finite subset of $T$
and let $a_F = \sum_{t\in F} a_t.$ Given $\epsilon >0,$ there is
$F_0$ such that, for each finite $F \supseteq F_0,$ $m_1(a-a_F) <
\epsilon/2$ and $m_2(a-a_F)<\epsilon /2.$

By Theorem \ref{th:2.1}, $m(a-a_F) = \sup\{m_1(x)+m_2(y): x+y =
a-a_F\}.$  Since $x\le a-a_F$ and $y\le a-a_F,$ we have $m_1(x)\le
m_1(a-a_F) <\epsilon/2$ and $m_2(y) \le m_2(a-a_F)<\epsilon/2.$ Then
$m_1(x)+m_2(y) <\epsilon,$ consequently $m(a-a_F)\le \epsilon.$
Hence, $m(a)-m(a_F) \le \epsilon $ and $m$ is completely additive.
\end{proof}

\begin{lemma}\label{le:4.2}  If $\{f_t\}_{t\in T}\uparrow f$ and $\{g_t\}_{t\in
T}\uparrow g,$ where $f_t,g_t,f,g \in \mathbb R^+$ for all $t\in T.$
Then
$$ \bigvee_{s,t}(f_s+g_t)=f+g, \eqno(4.2)
$$

If, in addition, $\{f_t\}_{t\in T}$ and $\{g_t\}_{t\in T}$ are
upwards equidirected, then
$$\{f_t+g_t\}_{t\in T} \uparrow f+g.\eqno(4.3)
$$

\end{lemma}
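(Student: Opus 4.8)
The plan is to prove the two statements about suprema of directed families of nonnegative reals. Both are elementary facts about order convergence in $\mathbb{R}$, and I expect no serious obstacle; the main care is in handling the two-index supremum in \eqref{4.2} correctly and in using the equidirectedness hypothesis precisely for \eqref{4.3}.

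For \eqref{4.2}, first I would establish that $f+g$ is an upper bound of $\{f_s+g_t\}_{s,t}$: since $f_s \le f$ and $g_t \le g$ for all $s,t$, we have $f_s+g_t \le f+g$. So $\bigvee_{s,t}(f_s+g_t) \le f+g$. For the reverse inequality, I would fix $\epsilon>0$ and use that $f=\bigvee_t f_t$ and $g=\bigvee_t g_t$ to choose indices $s_0,t_0\in T$ with $f_{s_0} > f-\epsilon/2$ and $g_{t_0} > g-\epsilon/2$. Then $f_{s_0}+g_{t_0} > f+g-\epsilon$, so the supremum over all pairs exceeds $f+g-\epsilon$. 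Since $\epsilon$ is arbitrary, $\bigvee_{s,t}(f_s+g_t) \ge f+g$, and equality follows.

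For \eqref{4.3}, the equidirectedness is what lets us pass from the two-index diagonal to a single-index family. First I would note that $\{f_t+g_t\}_{t\in T}$ is an upwards directed set: given $s,t\in T$, upwards equidirectedness supplies $v\in T$ with $f_v\ge f_s$, $f_v\ge f_t$, $g_v\ge g_s$, $g_v\ge g_t$, whence $f_v+g_v \ge f_s+g_s$ and $f_v+g_v\ge f_t+g_t$. Next, each $f_t+g_t \le f+g$, so $f+g$ is an upper bound of the single-index family and $\bigvee_t(f_t+g_t) \le f+g$. Finally, since $\{f_t+g_t : t\in T\} \subseteq \{f_s+g_t : s,t\in T\}$, we have $\bigvee_t(f_t+g_t) \ge \bigvee_{s,t}(f_s+g_t) = f+g$ by \eqref{4.2} (noting the diagonal family is cofinal in the product family, which is exactly the content of equidirectedness combined with \eqref{4.2}). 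Combining the two inequalities gives $\bigvee_t(f_t+g_t)=f+g$, and together with the directedness this is precisely $\{f_t+g_t\}_{t\in T}\uparrow f+g$.

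The one point deserving attention is the step $\bigvee_t(f_t+g_t) \ge \bigvee_{s,t}(f_s+g_t)$: a priori the diagonal supremum is only $\le$ the full two-index supremum, so I would argue cofinality directly. Given any pair $(s,t)$, equidirectedness yields $v$ with $f_v\ge f_s$ and $g_v\ge g_t$, hence $f_v+g_v \ge f_s+g_t$; thus every element of the product family is dominated by a diagonal element, forcing the two suprema to agree. This is the only place where the equidirectedness hypothesis is genuinely used, and it is the heart of why \eqref{4.3} holds while \eqref{4.2} needs no such assumption.
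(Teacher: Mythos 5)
Your proof is correct and follows essentially the same route as the paper: both parts rest on showing that $f+g$ is the least upper bound of the two-index family $\{f_s+g_t\}$ and then using equidirectedness to show the diagonal family $\{f_t+g_t\}$ is cofinal in it, so that (4.3) follows from (4.2). The only cosmetic difference is that for (4.2) you verify the least-upper-bound property by an $\epsilon$-argument, whereas the paper uses the order-theoretic subtraction trick ($f_s+g_t\le x$ for all $s,t$ gives $f\le x-g_t$, hence $g\le x-f$, hence $f+g\le x$); over $\mathbb{R}^+$ these are interchangeable.
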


\begin{proof} We have $f_s+g_t \le  f+g$ for all $s,t \in T.$  If
$f_s+g_t \le x$ for some $x \in \mathbb R,$ then $f_s \le x-g_t$ so
that $f\le x-g_t$ and hence, $g_t\le x-f$ so that $g\le x-f$ and
$f+g\le x$ that gives, (4.2).

Now assume $\{f_t\}$ and $\{g_t\}$ are upwards equidirected.
It is clear that $f_t+g_t \le f+g.$ The equidirectness  entails
that, for all indices $s_0,t_0\in T$, there exists an index $t$ such
that $f_{s_0}\le f_t$ and $f_{t_0} \le g_t.$ Therefore, for all
indices $s_0$ and $t_0$, $f_{s_0}+g_{t_0} \le f_t+g_t\le f+g$
which by (4.2) gives (4.3).
\end{proof}

\begin{lemma}\label{le:4.3}  Let $E$ be a pseudo effect algebra with
{\rm (RDP)}. If $\{m_i\}$ is a chain of measures in $\mathcal
M^+(E)$ that is bounded above, then for $m_0=\bigvee_i m_i$ we have

$$m_0(a)=\sup_i m_i(a), \eqno(4.4)$$
for each $a \in E.$

\end{lemma}

\begin{proof} We assert that if $d(x)=\bigvee_i m_i(x),$ $x\in E,$ then $d$ is
additive, i.e., $d(x+y)=d(x)+d(y)$ whenever $x+y$ is defined in $E.$
This follows from the fact that $\{m_i(x)\}\uparrow d(x)$ and
$\{m_i(y)\}\uparrow d(y),$ are upwards equidirected  because
$\{m_i\}$ is a chain, and $\{m_i(x)+m_i(y)\} =\{m_i(x+y)\}\uparrow
(d(x)+d(y))$ by (4.3).

Since, $d$ is a measure such that $m_i\le^+ d \le^+ m_0,$ we
conclude $d=m_0.$
\end{proof}

Now we present a Yosida-Hewitt type of decomposition for measures on
$E.$

\begin{theorem}\label{th:4.4}  Let $E$ be an effect algebra with
{\rm (RDP)}. Then every measure $m$ on $E$ can be uniquely expressed
in the form
$$
m= m_1 + m_2,\eqno(4.5)
$$
where $m_1 \in \mathcal M^+(E)_{\rm ca}$ and $m_2$ is a finitely
additive measure on $E$ such that if $t \in \mathcal M^+(E)_{\rm
ca},$ such  $t \le^+m_2,$ then $t=0.$

In particular, every state $s$ on $E$ can be uniquely expressed as a
convex combination
$$ s = \lambda_1s_1 +\lambda_2 s_2,\eqno(4.6)
$$
where $s_1$ is a completely additive state and $s_2$ is a finitely
additive state such that if $\alpha s' \le^+ s_2$ for some
completely additive state $s'$ on $E$ and for some constant $\alpha
\ge 0,$ then $\alpha =0.$
\end{theorem}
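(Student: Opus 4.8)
The plan is to apply Theorem \ref{th:3.4} to the face $F = \mathcal{S}(E)_{\rm ca}$ of completely additive states. As noted just before the theorem, $F$ is a face of $\mathcal{S}(E)$, and by (4.1) we have $V(F) = \mathcal{M}^+(E)_{\rm ca}$. The only hypothesis of Theorem \ref{th:3.4} that needs checking is that this cone is $\vee$-closed; once that is established, the direct convex sum decomposition, its uniqueness, and the identification of the complementary cone $V(F')$ with the set of measures $m_2$ satisfying the stated singularity condition all follow immediately from Theorem \ref{th:3.4} together with the identity $V(F)^\sharp = V(F')$ recorded before it.

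First I would verify $\vee$-closedness. Let $\{m_i\}$ be a chain in $\mathcal{M}^+(E)_{\rm ca}$ bounded above in $\mathcal{M}^+(E)$, and put $m_0 = \bigvee_i m_i$. By Lemma \ref{le:4.3}, $m_0(x) = \sup_i m_i(x)$ for every $x \in E$. To show $m_0$ is completely additive, take a summable system with $a = \sum_{t\in T} a_t$ and, for a finite $F \subseteq T$, write $a_F = \sum_{t\in F} a_t$. Complete additivity of each $m_i$ gives $m_i(a) = \sup_F m_i(a_F)$, while Lemma \ref{le:4.3} applied at $a_F$ gives $m_0(a_F) = \sup_i m_i(a_F)$. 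The crux is then the interchange of suprema
$$ m_0(a) = \sup_i m_i(a) = \sup_i \sup_F m_i(a_F) = \sup_F \sup_i m_i(a_F) = \sup_F m_0(a_F), $$
which says precisely that $m_0(a) = \sum_{t\in T} m_0(a_t)$, i.e. $m_0 \in \mathcal{M}^+(E)_{\rm ca} = V(F)$. Hence $V(F)$ is $\vee$-closed.

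With $\vee$-closedness in hand, Theorem \ref{th:3.4} yields at once that $\mathcal{S}(E)$ is the direct convex sum of $F$ and $F'$ and that every measure $m$ decomposes uniquely as $m = m_1 + m_2$ with $m_1 \in V(F) = \mathcal{M}^+(E)_{\rm ca}$ and $m_2 \in V(F')$. Since $V(F') = V(F)^\sharp$, the condition $m_2 \in V(F')$ unwinds to: $t \le^+ m_2$ with $t \in \mathcal{M}^+(E)_{\rm ca}$ forces $t = 0$, which is exactly (4.5). The state version (4.6) is the specialization to a state $s$: the direct convex sum expresses $s = \lambda_1 s_1 + \lambda_2 s_2$ with $s_1 \in F$ and $s_2 \in F'$, and the uniqueness clause of Theorem \ref{th:3.4} (via \cite[Prop 10.12]{Goo}) gives uniqueness of the coefficients and of the component states whenever the coefficients are positive.

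The step I expect to be the real work is the $\vee$-closedness of $\mathcal{M}^+(E)_{\rm ca}$, that is, that a bounded chain of completely additive measures has a completely additive supremum. Everything hinges on Lemma \ref{le:4.3}, which reduces $m_0$ to a pointwise supremum, and on commuting the two suprema above; for suprema of nonnegative reals this commutation is unconditional, so the argument is clean. I would emphasize that commutativity of $E$ is essential here, since complete additivity and summable systems are only defined for effect algebras; this is precisely why the theorem is stated for effect algebras rather than for general pseudo effect algebras.
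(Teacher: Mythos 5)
Your proposal is correct and follows essentially the same route as the paper: both reduce the problem to showing that $\mathcal M^+(E)_{\rm ca}$ is a $\vee$-closed cone via Lemma \ref{le:4.3} and then invoke Theorem \ref{th:3.4}. Your interchange of suprema $\sup_i\sup_F m_i(a_F)=\sup_F\sup_i m_i(a_F)$ is just a rephrasing of the paper's upper-bound argument for the complete additivity of $m_0=\bigvee_i m_i$.
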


\begin{proof}  First we show  that $\mathcal  M^+(E)_{\rm ca}$ is a $\vee$-closed
cone in $ \mathcal M^+(E).$ Thus let $\{m_i\}$ be a chain of
completely additive measures on $E$ that is bounded above by a
finitely additive measure $m'.$ By Theorem \ref{th:2.1}, there is
$m_0=\bigvee_i m_i$ that is finitely additive and $m_0\le^+ m'.$

We assert that $m_0$ is completely additive. Let $ a = \sum _{t \in
T} a_t$ exist in $L.$ Then from the monotonicity of $m_0$ we have
$m_0 (a_F) \le m _0(a),$ where $a_F := \sum_{t \in F} a_t$ for any
$F$ finite subset of the index set $T$.

Assume that $m_0(a_F) \le x$ for some $x \in \mathbb R^+$ and  for
any finite $F.$ Then $m_i(a_F) \le x$ for any $i$ and any $F.$ The complete
additivity of $m_i$ entails that $m_i(a) \le x$ for any $i,$ so that
by (4.4),  $m_0(a) = \sup_i m_i(a) \le x.$ This gives
$$m_0(a) = \sup_{F } \sum_{t \in F} m_0(a_t),
$$
consequently $m_0 \in \mathcal M^+(E)_{\rm ca},$ and $\mathcal
M^+(E)_{\rm ca}$ is a $\vee$-closed cone of $\mathcal J(E).$

Now let $m$ be an arbitrary finitely additive measure on $E.$ Since
all the conditions of Theorem \ref{th:3.4} are satisfied, we obtain
the unique decomposition $m=m_1+m_2$ in question. Similarly we have
(4.6).
\end{proof}

\begin{theorem}\label{th:4.5}  Let $E$ be a pseudo effect algebra with
{\rm (RDP)}. Then every measure $m$ on $E$ can be uniquely expressed
in the form
$$
m= m_1 + m_2,\eqno(4.7)
$$
where $m_1 \in \mathcal M^+(E)_{\sigma}$ and $m_2$ is a finitely
additive measure on $E$ such that if $t \in \mathcal
M^+(E)_{\sigma},$ $t\ge 0$, such  $t \le^+m_2,$ then $t=0.$

In particular, every state $s$ on $E$ can be uniquely expressed as a
convex combination
$$ s = \lambda_1s_1 +\lambda_2 s_2,\eqno(4.8)
$$
where $s_1$ is a $\sigma$-additive state and $s_2$ is a finitely
additive state such that if $\alpha s' \le^+ s_2$ for some
completely additive state $s'$ on $E$ and for some constant $\alpha
\ge 0,$ then $\alpha =0.$
\end{theorem}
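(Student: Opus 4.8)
The plan is to follow the pattern of Theorem~\ref{th:4.4} verbatim, replacing complete additivity by $\sigma$-additivity throughout. The decomposition will be produced by applying Theorem~\ref{th:3.4} to the face $F=\mathcal S(E)_\sigma$ of $\sigma$-additive states, for which (4.1) identifies the associated cone as $V(F)=\mathcal M^+(E)_\sigma$. Consequently the whole content of the proof reduces to one verification: that $\mathcal M^+(E)_\sigma$ is a $\vee$-closed cone in $\mathcal M^+(E)$. Once that is established, Theorem~\ref{th:3.4} delivers at once both the existence and the uniqueness of the decomposition $m=m_1+m_2$ with $m_1\in V(F)=\mathcal M^+(E)_\sigma$ and $m_2\in V(F')$, and the singularity clause imposed on $m_2$ in (4.7) is exactly the description of $V(F')=V(F)^\sharp$ recorded just before Theorem~\ref{th:3.4}.

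To prove $\vee$-closedness I would take a chain $\{m_i\}$ of $\sigma$-additive measures that is bounded above by a finitely additive measure $m'$. By Theorem~\ref{th:2.1} the supremum $m_0=\bigvee_i m_i$ exists in $\mathcal J(E)$, is a finitely additive measure, and satisfies $0\le^+ m_i\le^+ m_0\le^+ m'$; in particular $m_0$ is real-valued and positive. Moreover Lemma~\ref{le:4.3} supplies the pointwise formula $m_0(a)=\sup_i m_i(a)$ for every $a\in E$. The one remaining task is to check that $m_0$ is itself $\sigma$-additive. For this I would fix $\{a_n\}\nearrow a$; since $m_0$ is monotone the values $m_0(a_n)$ increase and are bounded above by $m_0(a)$, so I set $x=\sup_n m_0(a_n)$ and try to show $m_0(a)\le x$, which together with monotonicity forces $m_0(a)=\lim_n m_0(a_n)$.

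The crux is an interchange of the supremum over $i$ with the limit in $n$, which I would handle by the same common-bound device as in Theorem~\ref{th:4.4} rather than by passing the two limits through each other directly. For every $i$ and every $n$ one has $m_i(a_n)\le m_0(a_n)\le x$; the $\sigma$-additivity of the individual $m_i$ then yields $m_i(a)=\sup_n m_i(a_n)\le x$, and a second application of Lemma~\ref{le:4.3} gives $m_0(a)=\sup_i m_i(a)\le x=\sup_n m_0(a_n)$. This is precisely the step I expect to be the main (indeed the only) obstacle, and it is exactly where the hypotheses accumulated in Theorem~\ref{th:2.1} and Lemma~\ref{le:4.3} are indispensable. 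It establishes that $m_0\in\mathcal M^+(E)_\sigma$ and hence that $\mathcal M^+(E)_\sigma$ is $\vee$-closed.

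With $\vee$-closedness in hand, all hypotheses of Theorem~\ref{th:3.4} hold for the face $F=\mathcal S(E)_\sigma$, so any measure $m$ on $E$ admits the unique decomposition $m=m_1+m_2$ with $m_1\in\mathcal M^+(E)_\sigma$ and $m_2\in V(F')$, which is (4.7). Since Theorem~\ref{th:3.4} simultaneously shows that $\mathcal S(E)$ is the direct convex sum of $F$ and $F'$, the convex decomposition (4.8) of an arbitrary state follows by normalizing $m_1$ and $m_2$ and reading off $\lambda_1,\lambda_2$ from their total masses, the uniqueness again being inherited from the direct-convex-sum structure.
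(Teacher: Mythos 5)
Your proposal is correct and is essentially the paper's own argument: the paper proves Theorem \ref{th:4.5} by the single remark that it ``follows the same steps as the proof of Theorem \ref{th:4.4}'', and your write-up is exactly that adaptation --- applying Theorem \ref{th:3.4} to the face $F=\mathcal S(E)_\sigma$ after checking $\vee$-closedness of $\mathcal M^+(E)_\sigma$ via the common upper bound $x$ and the pointwise supremum formula (4.4). Your verification of $\sigma$-additivity of $m_0$ is the correct $\sigma$-additive analogue of the complete-additivity step in Theorem \ref{th:4.4}, so nothing is missing.
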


\begin{proof} It follows the same steps as the proof of Theorem
\ref{th:4.4}.
\end{proof}

\begin{remark}\label{re:4.6}
{\rm Theorems \ref{th:4.5} and \ref{th:4.5} have been proved in [11,
12,  9, 17]. They are analogues of the classical Yosida--Hewitt
decomposition from \cite{YoHe}. In \cite{DeMo}, the component $m_2$
from  Theorem \ref{th:4.4} is said to be a {\it weakly purely
additive measure} and that from Theorem \ref{th:4.5} a {\it purely
additive measure}.}
\end{remark}

Now we present another Yosida--Hewitt type decomposition for  an
analogue of complete additivity of measures  for pseudo effect
algebra.  We say that a measure $m$ on $E$ is {\it  upwards
continuous} if $\{a_t\}\uparrow a$ entails $\{m(a_t)\}\uparrow
m(a).$ A measure $m$ is  upwards continuous  iff $\{a_t\}\downarrow
0$ implies $\{ m(a_t)\}\downarrow 0.$

For example, if $E$ is an effect algebra, then $m$ is completely
additive whenever  $m$ is upwards continuous. Indeed, let $m$ be an
upwards continuous measure and let $a =\sum_{t\in T} a_t.$  Given
any finite subset $F$ of indices we define $a_F = \sum_{t\in F}
a_t.$ Then $\{a_F\}_F$ is upwards directed and $\{a_F\}\uparrow a,$
so that $m(a)=\sum_t m(a_t).$


\begin{theorem}\label{th:tot} Let $E$ be a pseudo effect algebra with
{\rm (RDP)}. Then every measure $m$ on $E$ can be uniquely expressed
in the form
$$
m= m_1 + m_2,
$$
where $m_1$ is an  upwards continuous measure and $m_2$ is a
finitely additive measure on $E$ such that if $t$ is an upwards
continuous measure with  $t \le^+m_2,$ then $t=0.$

In particular, every state $s$ on $E$ can be uniquely expressed as a
convex combination
$$ s = \lambda_1s_1 +\lambda_2 s_2,
$$
where $s_1$ is an upwards continuous  state and $s_2$ is a finitely
additive state such that if $\alpha s' \le^+ s_2$ for some upwards
continuous state $s'$ on $E$ and for some constant $\alpha
\ge 0,$ then $\alpha =0.$
\end{theorem}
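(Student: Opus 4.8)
The plan is to apply the abstract decomposition Theorem~\ref{th:3.4} to the set $F$ of all upwards continuous states on $E$, in exactly the way Theorems~\ref{th:4.4} and~\ref{th:4.5} applied it to $\mathcal S(E)_{\rm ca}$ and $\mathcal S(E)_\sigma$. This reduces the task to three verifications: that $F$ is a face of $\mathcal S(E)$; that the associated cone $V(F)$ coincides with the cone $\mathcal M^+(E)_{\rm uc}$ of all upwards continuous measures (the analogue of~(4.1)); and that $V(F)$ is $\vee$-closed. Once these hold, Theorem~\ref{th:3.4} yields both the existence and the uniqueness of the splitting $m=m_1+m_2$ with $m_1\in V(F)$ and $m_2\in V(F')$, and the description of $V(F')=V(F)^\sharp$ recorded just before Theorem~\ref{th:3.4} says precisely that every upwards continuous $t\le^+ m_2$ must vanish, which is the asserted singularity condition. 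The convex form for states then follows from the direct convex sum structure by normalizing at $1$.

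First I would check that $F$ is a face. For convexity, if $s_1,s_2$ are upwards continuous and $\{a_t\}\downarrow 0$, then the downwards directed nonnegative nets $\{s_1(a_t)\}\downarrow 0$ and $\{s_2(a_t)\}\downarrow 0$ are automatically downwards equidirected (since $\{a_t\}$ is downward directed), so the downward form of Lemma~\ref{le:4.2} gives $\{\lambda s_1(a_t)+(1-\lambda)s_2(a_t)\}\downarrow 0$, whence any convex combination lies in $F$. For the face property proper, suppose $s=\lambda s_1+(1-\lambda)s_2\in F$ with $\lambda\in(0,1)$; for $\{a_t\}\downarrow 0$ the nets $\{s_i(a_t)\}$ are nonnegative and decreasing with infima $f_i\ge 0$, and the downward form of Lemma~\ref{le:4.2} gives $\lambda f_1+(1-\lambda)f_2=\inf_t s(a_t)=0$, forcing $f_1=f_2=0$ and hence $s_1,s_2\in F$. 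The identity $V(F)=\mathcal M^+(E)_{\rm uc}$ is then routine: a nonzero upwards continuous measure equals $m(1)$ times the upwards continuous state $m/m(1)$, and conversely.

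The main obstacle, and the heart of the argument, is the $\vee$-closedness of $V(F)$. Let $\{m_i\}$ be a chain in $V(F)$ bounded above in $\mathcal M^+(E)$; by Theorem~\ref{th:2.1} the supremum $m_0=\bigvee_i m_i$ is a measure, and by Lemma~\ref{le:4.3} it is computed pointwise, $m_0(a)=\sup_i m_i(a)$. I would show $m_0$ is upwards continuous by the same device used in Theorem~\ref{th:4.4}: given $\{a_t\}\uparrow a$, monotonicity gives $m_0(a_t)\le m_0(a)$, and if $m_0(a_t)\le x$ for all $t$, then $m_i(a_t)\le m_0(a_t)\le x$ for all $i$ and $t$, so the upward continuity of each $m_i$ yields $m_i(a)=\sup_t m_i(a_t)\le x$, and therefore $m_0(a)=\sup_i m_i(a)\le x$. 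Thus $\{m_0(a_t)\}\uparrow m_0(a)$, i.e. $m_0\in V(F)$, establishing $\vee$-closedness. With this in hand Theorem~\ref{th:3.4} applies verbatim, delivering the unique decomposition $m=m_1+m_2$ and its convex counterpart, which completes both parts of the statement.
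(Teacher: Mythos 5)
Your proposal is correct and follows essentially the same route as the paper: take $F=\mathcal S(E)_{\rm uc}$, note $V(F)=\mathcal M^+(E)_{\rm uc}$, prove $\vee$-closedness of this cone using the pointwise formula $m_0(a)=\sup_i m_i(a)$ from Lemma \ref{le:4.3}, and invoke Theorem \ref{th:3.4}. The only (immaterial) difference is that you establish $\sup_t m_0(a_t)=m_0(a)$ by an interchange-of-suprema/upper-bound argument in the style of Theorem \ref{th:4.4}, where the paper runs an $\epsilon/2$ argument, and you additionally spell out the face property that the paper merely asserts.
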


\begin{proof}
Let $\mathcal M ^+(E)_{\rm uc}$ and $\mathcal S(E)_{\rm uc}$ be the
sets of upwards continuous measures and states, respectively, on
$E.$ Then $\mathcal S(E)_{\rm uc}$ is a face and $V(\mathcal
S(E)_{\rm uc})= \mathcal M^+(E).$ We show that $\mathcal M
^+(E)_{\rm uc}$ is a $\vee$-closed cone.  Indeed, let $\{m_i\}$ be a
chain in $\mathcal M ^+(E)_{\rm uc}.$ Then $\sup_tm(a_t)\le m(a).$
By (4.4), for $m = \bigvee_i m_i,$ we have $m(a)=\sup_i m_i(a)$ for
each $a\in E.$ Assume $\{a_t\}\uparrow a.$  Given $\epsilon >0,$
there is $i$ such that $m_i(a)> m(a)-\epsilon/2.$ Since $m_i$ is
upwards continuous, there is $a_t$ such that $m_i(a_t)> m_i(a) -
\epsilon/2.$ Then $m_i(a_t) \ge m(a)-\epsilon$ and by (4.4), $m(a_t)
> m(a)-\epsilon.$ Therefore, $m(a) = \sup_t m(a_t).$

For the final desired result, we apply Theorem  \ref{th:3.4}.
\end{proof}

In what follows, we present two types of the Lebesgue decomposition.

Let $m_1$ and $m_2$ be measures on a pseudo effect algebra $E.$ We
say that  (i) $m_1$ is {\it absolutely continuous with} respect to
$m_2$, and we write $m_1 \ll m_2$ if $m_2(a)=0$ implies $m_1(a) = 0$
for $a \in E.$  (ii) $m_1$ is $m_2$-{\it continuous}, and we write
$m_1 \ll_\epsilon m_2$ provided given $\epsilon
>0,$ there is a $\delta >0$ such that $m_2(a) <\delta$ yields
$m_1(a)<\epsilon.$
(iii) $m_1\perp m_2$ if there is an element $a\in E$ such
that $m_2(a)=0=m_1(a^-).$

It is clear that $m_1 \ll_\epsilon m_2$ entails $m_1\ll m_2.$

\begin{theorem}\label{th:4.7}  Let $E$ be a pseudo effect algebra
with {\rm (RDP)}.  Let $t$ be a fixed measure on $E.$ Then every
measure $m$ on $E$ can be uniquely expressed in the form

$$ m = m_1 +m_2, \eqno(4.9)
$$
where $m_1$ and $m_2$ are finitely additive measures on $E$ such
that $m_1\ll_\epsilon t$ and if $m'$ is any measure such that
$m'\ll_\epsilon t$ and $m_1\le ^+ m_2,$ then $m'=0$.  Moreover,
$m_2\wedge t=0.$

In particular, every state $s$ on $E$ can be uniquely expressed  as
a convex combination of two states $s_1$ and $s_1$ on $E,$
$$
s = \lambda_1s_1 +\lambda_2 s_2,\eqno(4.10)
$$
where $s_1\ll_\epsilon t$ and  if $s'$ is any state such that
$s'\ll_\epsilon t$ and $\alpha s'\le ^+  s_2,$ then $\alpha =0.$
\end{theorem}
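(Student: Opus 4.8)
The whole theorem has the same architecture as the Yosida–Hewitt results (Theorems \ref{th:4.4}, \ref{th:4.5}): identify an appropriate face $F$ of $\mathcal S(E)$, check that the associated cone $V(F)$ is $\vee$-closed, and then quote Theorem \ref{th:3.4} to get both the direct-convex-sum structure and the unique decomposition. So first I would set $F := \{s \in \mathcal S(E): s \ll_\epsilon t\}$ (where $t$ is the fixed measure, renormalized to a state when convenient) and check that $F$ is a face. Convexity and the face property follow because $s = \lambda s_1 + (1-\lambda)s_2$ with $\lambda \in (0,1)$ forces $s_1, s_2 \le^+ s/\min(\lambda,1-\lambda)$, and the $\epsilon$–$\delta$ condition $m_2(a) < \delta \Rightarrow m(a) < \epsilon$ is inherited under such a domination (if $s \ll_\epsilon t$ then each component, being dominated by a multiple of $s$, is again $t$-continuous). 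I would then identify $V(F) = \{\alpha s : \alpha \ge 0,\ s \in F\}$ with the cone of all measures $m$ satisfying $m \ll_\epsilon t$, and note that by the discussion preceding Theorem \ref{th:3.4}, $V(F)^\sharp = V(F')$ consists precisely of the $V(F)$-singular measures, i.e. those $m_2$ for which $m' \le^+ m_2$ with $m' \ll_\epsilon t$ forces $m' = 0$. This matches the statement's description of $m_2$.

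The crux is \textbf{$\vee$-closedness of $V(F)$}. Let $\{m_i\}$ be a chain in $V(F)$ bounded above in $\mathcal M^+(E)$, and put $m_0 = \bigvee_i m_i$. By Lemma \ref{le:4.3} I have the pointwise formula $m_0(a) = \sup_i m_i(a)$ for all $a \in E$, which is the tool that makes everything work. I must show $m_0 \ll_\epsilon t$. Fix $\epsilon > 0$. The difficulty is that the $\delta$ witnessing $m_i \ll_\epsilon t$ may shrink to $0$ along the chain, so I cannot simply take a sup of the $m_i$ with a fixed $\delta$. The natural move is to first pass to a single dominating scale: since the chain is bounded by some $m' \in \mathcal M^+(E)$ with $m_0 \le^+ m'$, and since $\{m_i\}$ is a chain, for the given $\epsilon$ I choose $i$ with $m_i(1)$ close to $m_0(1)$, so that $m_0 - m_i$ is small in total mass. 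Then split $m_0(a) = m_i(a) + (m_0-m_i)(a)$: the first term is controlled by the $t$-continuity of the single measure $m_i$ via its own $\delta$, and the second term is uniformly small because $(m_0 - m_i)(a) \le (m_0 - m_i)(1)$ is below $\epsilon/2$. This is exactly the two-term estimate used in Theorem \ref{th:tot}, and I expect this step to be the main obstacle, because it requires that monotone control of the mass at $1$ transfers to an $\epsilon$–$\delta$ bound uniformly in $a$.

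With $\vee$-closedness established, the \textbf{existence and uniqueness} are immediate from Theorem \ref{th:3.4}: every measure $m$ decomposes uniquely as $m = m_1 + m_2$ with $m_1 \in V(F)$ (so $m_1 \ll_\epsilon t$) and $m_2 \in V(F') = V(F)^\sharp$ (so $m_2$ is $t$-continuous-singular as stated), and every state decomposes uniquely as the convex combination $s = \lambda_1 s_1 + \lambda_2 s_2$ of (4.10). It remains to verify the extra clause $m_2 \wedge t = 0$. Here I would apply Proposition \ref{pr:3.2}: since $m_2 \in V(F')$ and the (normalized) measure $t$ itself lies in $F$ (trivially $t \ll_\epsilon t$), the characterization of the complementary face gives $s_2 \wedge s_t = 0$ for the corresponding states, and hence $m_2 \wedge t = 0$ after rescaling, using that $\wedge$ in the $\ell$-group $\mathcal J(E)$ is computed by the infimum formula of Theorem \ref{th:2.1}. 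I would close by remarking that the state version (4.10) follows from the measure version by normalization, exactly as in the proofs of Theorems \ref{th:4.4} and \ref{th:4.5}.
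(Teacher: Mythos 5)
Your proposal matches the paper's proof essentially step for step: the same face $F(t)=\{s\in\mathcal S(E): s\ll_\epsilon t\}$, the same identification of $V(F(t))$ with the cone of all measures $m\ll_\epsilon t$, and the same two-term estimate $m_0(a)=m_{i_0}(a)+(m_0-m_{i_0})(a)$ (choosing $i_0$ so that $(m_0-m_{i_0})(1)<\epsilon/2$ and then using the single $\delta$ of $m_{i_0}$) to establish $\vee$-closedness before invoking Theorem \ref{th:3.4}. The only, immaterial, divergence is the final clause $m_2\wedge t=0$: the paper argues directly that any $k\le^+ m_2$ with $k\le^+ t$ satisfies $k\ll_\epsilon t$ and hence $k=0$, whereas you route through Proposition \ref{pr:3.2}; both are fine.
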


\begin{proof} If $t=0,$ the statement is trivial. Suppose that
$t(1)>0$  and let $F(t)=\{s \in \mathcal S(E): s\ll_\epsilon t\}.$
Then $t_0=t/t(1) \in F(t)$ and $F(t)$ is a non-empty face of
$\mathcal S(E).$

Let $C(t):=\{s \in \mathcal M^+(E): s \ll_\epsilon t\}.$  Then
$C(t)$ is a nonempty cone that is a subcone of $\mathcal M^+(E),$
and $C(t)=V(F(t)).$ We claim that $C(t)$ is $\vee$-closed.  Let
$\{m_i\}$ be a chain from $C(t)$ that is bounded above by $m' \in
\mathcal M^+(E).$ If we define $d(x)=\bigvee_i m_i(x),$ according to
Lemma \ref{le:4.3}, we have $d =m_0 :=\bigvee_i m_i$ and
$\{m_i(1)\}\uparrow d(1).$ Since $\mathcal J(E)$ is an Abelian
Dedekind complete $\ell$-group, we have $\{m_0-m_i\}\downarrow 0.$
Therefore, $\{(m_0-m_i)(1)\}=\{m_0(1) -m_i(1)\}\downarrow 0.$ Thus
given $\epsilon >0,$ there is an index $i_0$ such that, for each
$m_i$ with $m_i\ge^+m_{i_0},$ we have $m_0(1)-m_i(1)<\epsilon/2.$

Fix the index $i_0.$ Given $\epsilon >0,$ there is $\delta >0$ such
that $t(a)<\delta$ implies $m_{i_0}(a)<\epsilon/2.$

Calculate,

$$m_0(a)= m_{i_0}(a) +(m_0-m_{i_0})(a) \le m_{i_0}(a)+ (m_0-m_{i_0})(1)\le
\epsilon/2 + (m_0-m_{i_0})(1) < \epsilon
$$
that gives $m_0\ll_\epsilon t$ and $m_0 \in C(t).$

Applying  the general result from Theorem \ref{th:3.4}, we have the
existence and uniqueness of (4.9).

Now we show that $m_2\wedge t=0.$  Let $k$ be a measure on $E$ such
that $k\le^+m_2$ and $k\le^+t$. Then $k\ll_\epsilon t$, so that
$k=0$ and whence $m_2\wedge t=0.$
\end{proof}

\begin{theorem}\label{th:4.8}  Let $E$ be a pseudo effect algebra
with {\rm (RDP)}.  Let  $t$ be a fixed measure on $E.$ Then every
measure $m$ on $E$ can be uniquely expressed in the form

$$ m = m_1 +m_2, \eqno(4.11)
$$
where $m_1$ and $m_2$ are finitely additive measures on $E$ such
$m_1\ll t$ and  if $m'$ is any measure on $E$ such that $m'\ll t$
and $m'\le^+m_2,$ then $m'=0.$

\end{theorem}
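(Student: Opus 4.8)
The plan is to run the proof of Theorem~\ref{th:4.7} with the relation $\ll_\epsilon$ replaced throughout by $\ll$; the scaffolding (faces, the cone $V(F)$, the singular cone $V(F)^\sharp$, and the general decomposition Theorem~\ref{th:3.4}) is reused verbatim, and only the verification of $\vee$-closedness changes. If $t=0$, then $m_1\ll t$ forces $m_1=0$, so the decomposition $m=0+m$ is the only one and the statement is trivial; hence assume $t(1)>0$ and put
$$F(t)=\{s\in\mathcal S(E): s\ll t\},\qquad C(t)=\{m\in\mathcal M^+(E): m\ll t\}.$$
Since $t/t(1)\in F(t)$, the set $F(t)$ is non-empty.

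First I would check that $F(t)$ is a face and that $C(t)=V(F(t))$. Convexity is clear, and if $s=\lambda s_1+(1-\lambda)s_2\in F(t)$ with $\lambda\in(0,1)$, then for every $a$ with $t(a)=0$ the equality $\lambda s_1(a)+(1-\lambda)s_2(a)=0$ of nonnegative summands forces $s_1(a)=s_2(a)=0$, so $s_1,s_2\in F(t)$. For $C(t)=V(F(t))$: multiplication by a constant $\alpha\ge0$ preserves $\ll$, while conversely a nonzero $m\in C(t)$ has $m(1)>0$ and normalizes to a state $m/m(1)\in F(t)$, and $0\in V(F(t))$ because $F(t)\neq\emptyset$. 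Consequently $C(t)$ is the cone attached to the face $F(t)$, and by the identity $V(F)^\sharp=V(F')$ recorded before Theorem~\ref{th:3.4}, the $m_2$-clause in the statement (every measure $m'\ll t$ with $m'\le^+m_2$ vanishes) is exactly the assertion $m_2\in V(F(t)')$.

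The \emph{crucial step} is that $C(t)$ is $\vee$-closed, and here the pointwise character of $\ll$ makes it immediate. Let $\{m_i\}$ be a chain in $C(t)$ bounded above in $\mathcal M^+(E)$. By Theorem~\ref{th:2.1} the join $m_0=\bigvee_i m_i$ exists in $\mathcal J(E)$ and is a measure, and by Lemma~\ref{le:4.3} it is computed pointwise: $m_0(a)=\sup_i m_i(a)$ for every $a\in E$. If $t(a)=0$ then $m_i(a)=0$ for each $i$ because $m_i\ll t$, whence $m_0(a)=\sup_i m_i(a)=0$; thus $m_0\ll t$ and $m_0\in C(t)$. With $\vee$-closedness established, Theorem~\ref{th:3.4} applies to $F(t)$ and yields, for each measure $m$, the unique decomposition $m=m_1+m_2$ with $m_1\in V(F(t))=C(t)$ (that is, $m_1\ll t$) and $m_2\in V(F(t)')=C(t)^\sharp$ (that is, the singularity condition), which is precisely (4.11).

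I do not expect a real obstacle. The one genuine difference from Theorem~\ref{th:4.7} is that its $\vee$-closedness needed the $\epsilon$--$\delta$ estimate controlling $(m_0-m_{i_0})(1)$, whereas absolute continuity is preserved under suprema by the one-line pointwise argument above. The only matters requiring care are the two bookkeeping identifications $C(t)=V(F(t))$ and $V(F(t)')=C(t)^\sharp$, both of which are formal consequences of the framework set up before Theorem~\ref{th:3.4}.
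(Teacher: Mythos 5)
Your proposal is correct and follows essentially the same route as the paper: both verify that the cone $\{m\in\mathcal M^+(E):m\ll t\}$ is $\vee$-closed by using the pointwise formula $m_0(a)=\sup_i m_i(a)$ from Lemma \ref{le:4.3} (so $t(a)=0$ forces $m_0(a)=0$), and then invoke Theorem \ref{th:3.4}. The extra bookkeeping you supply (the $t=0$ case, the face property of $F(t)$, and the identifications $C(t)=V(F(t))$ and $V(F(t)')=C(t)^\sharp$) is left implicit in the paper but is exactly what its argument relies on.
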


\begin{proof}  Let us define $V(t)=\{m \in \mathcal M^+(E): m \ll t\}$
and $C(t):=\{s\in \mathcal S(E): s \ll t\}.$ Then $F(t)$ is a face
of $\mathcal S(E)$ and it generates $V(t)= V(F(t)).$ Assume that
$\{m_i\}$ is a chain from $V(t)$ that is bounded in $\mathcal J(E).$
If we set $d(x)=\bigvee_i m_i(x)$ for each $x\in E,$ then according
to (4.4), $d=m_0:=\bigvee_i m_i$. Therefore, if $t(a)=0,$ then
$m_i(a)=0$ for each $i$ so that $m(a)=d(a)=0$ and $m_0\in V(t).$

The existence and uniqueness of (4.11) follows from Theorem
\ref{th:3.4}.
\end{proof}

Finally, we show a  relation among two  types of continuity of
measures.

\begin{proposition}\label{pr:4.9} Let $s_1$ and $s_s$ be two
$\sigma$-additive measures on a $\sigma$-complete MV-algebra $E.$
Then $s_1 \ll s_2$ if and only if $s_1\ll_\epsilon s_2.$
\end{proposition}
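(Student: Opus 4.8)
Since it has already been observed that $s_1\ll_\epsilon s_2$ implies $s_1\ll s_2$, the plan is to establish the converse, which is the only substantive direction. I would assume $s_1\ll s_2$ and argue by contradiction: if $s_1$ were not $s_2$-continuous, there would be an $\epsilon_0>0$ such that for every $\delta>0$ some $a\in E$ has $s_2(a)<\delta$ yet $s_1(a)\ge \epsilon_0$. Taking $\delta=2^{-n}$ for each $n$ produces a sequence $\{a_n\}$ in $E$ with $s_2(a_n)<2^{-n}$ and $s_1(a_n)\ge\epsilon_0$ for every $n$.

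The core idea is the classical ``limsup'' construction. I would pass to the tails $b_n=\bigvee_{k\ge n}a_k$, which exist because $E$ is $\sigma$-complete and satisfy $b_n\ge b_{n+1}$; put $b=\bigwedge_n b_n$. Countable subadditivity controls $s_2$: the finite joins $\bigvee_{k=n}^N a_k$ increase to $b_n$, each of them has $s_2$-value at most $\sum_{k=n}^N s_2(a_k)$, and letting $N\to\infty$ and invoking the upward $\sigma$-continuity of $s_2$ gives $s_2(b_n)\le\sum_{k\ge n}2^{-k}=2^{-(n-1)}$. Since $b_n\downarrow b$ and $s_2$ is continuous from above, $s_2(b)=\lim_n s_2(b_n)=0$. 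For $s_1$, monotonicity together with $a_n\le b_n$ gives $s_1(b_n)\ge s_1(a_n)\ge\epsilon_0$, so continuity from above yields $s_1(b)=\lim_n s_1(b_n)\ge\epsilon_0>0$. Thus $s_2(b)=0$ while $s_1(b)>0$, contradicting $s_1\ll s_2$; this contradiction forces $s_1\ll_\epsilon s_2$.

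The main obstacle is supplying the two analytic facts cleanly, both of which rely on representing the $\sigma$-complete MV-algebra as $\Gamma(G,u)$ for an Abelian $\ell$-group $G$. For finite (hence countable) subadditivity of a measure $s$ I would set $c=(a\vee b)\minusli a$; then $c\in E$, and $a+c=a\vee b$ and $(a\wedge b)+c=b$ are valid partial sums, so additivity of $s$ gives $s(a\vee b)=s(a)+s(b)-s(a\wedge b)\le s(a)+s(b)$. For continuity from above, given $b_n\downarrow b$ I would consider the translates $d_n=b_n-b$ computed in $G$: they lie in $[0,u]=E$, decrease, and satisfy $\bigwedge_n d_n=0$ because translation is a lattice automorphism of $G$, so $d_n\downarrow 0$; then the stated sequential form of $\sigma$-additivity ($\{d_n\}\searrow 0\Rightarrow s(d_n)\to 0$) together with $s(b_n)=s(b)+s(d_n)$ gives $s(b_n)\to s(b)$. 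Both ingredients use only the $\ell$-group picture of the MV-algebra and the sequential $\sigma$-additivity already in hand, so no further hypotheses are needed.
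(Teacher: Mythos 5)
Your proof is correct and follows essentially the same route as the paper: the limsup element $\bigwedge_n\bigvee_{k\ge n}a_k$ built from a sequence with $s_2(a_n)<2^{-n}$ and $s_1(a_n)\ge\epsilon_0$, forcing $s_2$ to vanish there while $s_1$ stays $\ge\epsilon_0$. You additionally supply the subadditivity and downward-continuity lemmas via the $\ell$-group representation, which the paper uses implicitly.
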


\begin{proof} It is clear that $s_1 \ll_\epsilon s_2$ entails
$s_1\ll s_2.$ Now let us suppose  $s_1 \ll s_2$ and let (ad
absurdum) $s_1\not\ll_\epsilon s_2.$ Then there is an $\epsilon >0$
such that for each $n\ge 1$, there is an $a_n \in E$ such that
$s_2(a_n) < 1/2^n$ and $s_1(a_n)\ge \epsilon.$  Set $a =
\bigwedge_{n=1}^\infty \bigvee_{n=k}^\infty a_k.$ Then
$$s_2(a) \le s_2(a_n\vee a_{n+1}\vee
\cdots) \le \sum_{k=n}^\infty s_2(a_k) < 1/2^{n-1},
$$
so that $s_2(a) = 0.$

On the other hand, $s_1(a) = \lim_n s_1(a_n\vee a_{n+1}\vee \cdots)
\ge \limsup_n s_1(a_n) \ge \epsilon$ that contradicts $s_1 \ll s_2.$
\end{proof}

We say that a measure $t$ on $E$ is {\it Jauch-Piron} if
$t(a)=t(b)=0$ entails there is an element $c\in E$ such that $a,b
\le c$ and $m(c).$ Every measure on an MV-algebra is Jauch-Piron.

Let $E$ be a pseudo effect with (RDP).  According to \cite[Thm
3.2]{Dvu}, we say that an element $a \in E$ is said to be {\it
central} or {\it Boolean}, if $a \wedge a^- =0.$  Then also $a\wedge
a^\sim =0$ and $a^-=a^\sim.$ Moreover, for any $x \in E,$ $x\wedge
a$ is defined in $E,$ and
$$x = (x\wedge a)+(x\wedge a^-).
$$

Let $C(E)$ be the set of all central elements of $E.$  Then it is a
Boolean algebra that is a subalgebra of $E.$ If, in addition, $E$ is
monotone $\sigma$-complete, then $C(E)$ is  a Boolean
$\sigma$-algebra, \cite[Thm 5.11]{Dvu}.  We recall that a pseudo
effect algebra $E$ is said to be {\it monotone $\sigma$-complete}
provided, for any sequence $\{a_n\}$ from $E$ such that $a_n\le
a_{n+1}$ for each $n\ge 1,$ $a=\bigvee_n a_n$ is defined in $E.$

If $a,b$ are central elements and $t$ is a measure, then $t(a)+t(b) = t(a\wedge
b)+t(a\vee b),$ so that if, in addition,  $t(a)=t(b)=0,$ then
$t(a\vee b)=0.$

Now we present another Lebesgue type of decomposition for measures.
For  two measures  $m$ and $t$ on $E$ we write $m\ll_C t$ provided
$t(a)=0$ for  $a \in C(E)$ implies $m(a) =0.$  It is a weaker form
of $m\ll t.$

\begin{theorem}\label{th:4.10}  Let $E$ be a monotone  $\sigma$-complete
pseudo effect algebra with {\rm (RDP)}  and let $t$ be a $\sigma$-additive
measure on $E$. Every $\sigma$-additive measure $m$ on $E$ can be
uniquely decomposed in the form

$$m=m_1 + m_2 \eqno(4.12)
$$
such that $m_1,m_2$ are  $\sigma$-additive measures, $m_1\ll_C t,$ and
$m_2\perp t.$

In particular, every $\sigma$-additive measure $s$ can be uniquely
decompose in the form
$$
s=\lambda s_1 +(1-\lambda)s_2,\eqno(4.13)
$$
where $s_1$ and $s_2$ are $\sigma$-additive measures on $E$ such
that $s_1\ll t$ and $s_2\perp t.$
\end{theorem}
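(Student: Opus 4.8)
The plan is to transport the classical Lebesgue decomposition from the Boolean $\sigma$-algebra $C(E)$ of central elements up to $E$ via the central projections $x\mapsto x\wedge c$. I will use throughout that $C(E)$ is a Boolean $\sigma$-algebra (since $E$ is monotone $\sigma$-complete), that $x=(x\wedge a)+(x\wedge a^-)$ for central $a$, and that $t(a)+t(b)=t(a\wedge b)+t(a\vee b)$ for central $a,b$. If $t=0$ the statement is trivial (take $m_1=0$, $m_2=m$, with witness $a=1$), so assume $t\ne 0$.

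For existence I would first isolate the central element carrying the $t$-singular mass of $m$. Set $\alpha=\sup\{m(e):e\in C(E),\ t(e)=0\}$ and choose central $t$-null $e_n$ with $m(e_n)\to\alpha$; replacing $e_n$ by $e_1\vee\cdots\vee e_n$ (still central and $t$-null, because $t(a\vee b)\le t(a)+t(b)$ for central $a,b$) I may assume $e_n\nearrow$, and put $d=\bigvee_n e_n\in C(E)$. Then $\sigma$-additivity of $t$ gives $t(d)=0$ and maximality gives $m(d)=\alpha$. With $c=d^-$ define $m_1(x)=m(x\wedge c)$ and $m_2(x)=m(x\wedge d)$. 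As $c,d$ are central, $x\mapsto x\wedge c$ is an additive $\sigma$-continuous projection onto $[0,c]$, so $m_1,m_2$ are $\sigma$-additive measures, and $x=(x\wedge c)+(x\wedge d)$ yields $m=m_1+m_2$. For $m_1\ll_C t$: if $a\in C(E)$ with $t(a)=0$, then $b:=a\wedge c$ is central and $t$-null with $b\wedge d\le c\wedge d=0$, so $m(b)+\alpha=m(b)+m(d)=m(b\vee d)\le\alpha$ (as $b\vee d$ is central and $t$-null), giving $m_1(a)=m(b)=0$. For $m_2\perp t$ the central witness $a=d$ works, since $t(d)=0$ and $m_2(d^-)=m_2(c)=m(c\wedge d)=0$.

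For uniqueness I would pass to the closed-face picture. The face $F(t)=\{s\in\mathcal S(E):s\ll_C t\}$ coincides with $\{s:N_t\subseteq\Ker(s)\}$ for $N_t=\{a\in C(E):t(a)=0\}$, hence is a closed face of the simplex $\mathcal S(E)$ by Proposition \ref{pr:3.1}; thus $\mathcal S(E)$ is the direct convex sum of $F(t)$ and its complementary face $F(t)'$ (see \cite[Thm 11.28]{Goo}), and every measure splits uniquely into a part in $V(F(t))$ and a part in $V(F(t)')$. The constructed $m_1$ lies in $V(F(t))$, and $m_2$ is $F(t)$-singular: if $m'\ll_C t$ and $m'\le^+ m_2$, then $m'(d^-)\le m_2(d^-)=0$, while $m'(d)=0$ because $d$ is central and $t$-null, so $m'(1)=m'(d)+m'(d^-)=0$ and $m'=0$. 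Hence $m_2\in V(F(t)')$ and our split is the canonical one; the desired uniqueness then follows once any admissible $m_2$, i.e. one with $m_2\perp t$, is shown to be $F(t)$-singular.

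The step I expect to be the main obstacle is precisely this last identification. The hypothesis $m_2\perp t$ supplies a witness $a$ that need not be central, whereas $\ll_C t$ and $F(t)$-singularity only detect central elements; so the crux is to upgrade the witness, producing a central $c_0$ with $t(c_0)=0$ and $m_2(c_0^-)=0$. This is where monotone $\sigma$-completeness, $\sigma$-additivity, and the Boolean $\sigma$-algebra structure of $C(E)$ (with its Jauch--Piron property) are essential. The same circle of ideas should give the sharper conclusion for states, upgrading $s_1\ll_C t$ to $s_1\ll t$: on the $\sigma$-complete Boolean algebra $C(E)$ absolute continuity coincides with $\epsilon$-continuity by Proposition \ref{pr:4.9}, and the concentration of $s_1$ on $c$ together with $t(c^-)=0$ should propagate $t$-nullity from central elements to arbitrary ones.
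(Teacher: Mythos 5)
Your existence argument is sound and is essentially the paper's own: the paper likewise produces a maximal central $t$-null element $a_0$ (via Zorn's lemma on $\Ker(t)_C=\{a\in C(E):t(a)=0\}$ preordered by the value of $m$, reducing a chain to an increasing sequence and using monotone $\sigma$-completeness exactly as you do with $e_n\nearrow d$), and then splits $m$ as $m_1(x)=m(x\wedge a_0^-)$, $m_2(x)=m(x\wedge a_0)$; your verification of $m_1\ll_C t$ via $m(b)+m(d)=m(b\vee d)\le\alpha$ is the same maximality computation, and your proof that the constructed $m_2$ is $F(t)$-singular (using the central witness $d$, so that $m'(d)=0$ and $m'(d^-)\le m_2(d^-)=0$) is literally the paper's argument with $d$ in place of $a_0$. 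The only cosmetic differences are that you realize the maximal element by a supremum-attaining sequence rather than Zorn, and that you work with the closed face $F(t)$ of all states $\ll_C t$ rather than the paper's face $F(t)_\sigma$ of $\sigma$-additive ones; both routes land in Theorem \ref{th:3.4}.

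The genuine gap is the one you flagged and did not close. Theorem \ref{th:3.4} gives uniqueness of the splitting into a $V(F(t))$ part and a $V(F(t))$-singular part; to convert this into uniqueness of the decomposition as stated, you must show that \emph{every} competing decomposition $m=n_1+n_2$ with $n_1\ll_C t$ and $n_2\perp t$ has $n_2$ $V(F(t))$-singular. The witness $a$ for $n_2\perp t$ need not be central, and $k\ll_C t$ gives no control over $k(a)$ for non-central $a$, so the implication is not formal: for instance, if $C(E)=\{0,1\}$ and $t\ne 0$, then $\ll_C t$ is vacuous and any nonzero $m$ with $m\perp t$ admits the two distinct decompositions $m=m+0$ and $m=0+m$, both satisfying the stated conditions. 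Asserting that monotone $\sigma$-completeness and the Jauch--Piron property ``should'' upgrade the witness to a central $c_0$ is a conjecture, not a proof, and the analogous promissory note for upgrading $s_1\ll_C t$ to $s_1\ll t$ in (4.13) is likewise unfulfilled. You should know that the paper's own proof stops at exactly the same point --- it verifies singularity only for the \emph{constructed} $m_2$, whose witness $a_0$ happens to be central, and never addresses arbitrary competitors --- so your diagnosis of the weak spot is accurate; but as submitted your proposal establishes existence plus uniqueness of the canonical face decomposition, not the uniqueness claim in the literal form of the theorem.
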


\begin{proof}
{\it Existence:} Let $\Ker(t)_C=\{a\in C(E): t(a)=0\}.$ The zero
element $0$ is central, so that $0\in \Ker(t)_C.$ We order the
elements of $\Ker(t)_C$ by $a\preceq b$ iff $a,b\in \Ker(t)_C$ and
$m(a)\le m(b).$  Then $\preceq$ is a partial order and now let
$\{a_i\}$ be a chain of elements from $\Ker(t)_C$ with respect to
$\preceq,$ and let $\delta = \sup_i m(a_i).$ Then either there is an
upper bound $a$ of $\{a_i\}$ itself or there is a sequence $\{a_n\}$
in $\{a_i\}$ such that $m(a_n)< m(a_{n+1}) \nearrow \delta.$ Set $a
=\bigvee_n a_n;$ then  $a \in \Ker(t)_C$ and $m(a)= \lim_n
m(a_n)=\delta,$ so that $a$ is an upper bound in $\Ker(t)_C$ for the
chain $\{a_i\}.$  Applying the Zorn Lemma, $\Ker(t)_C$ contains a
maximal element, say $a_0.$

Set $m_1(x)=m(x\wedge a^-_0)$ and $m_2(x)=m(x\wedge a_0)$ for each
$x \in E.$  Since $C(E)$ is a Boolean $\sigma$-algebra, and $x =
(x\wedge a^-_0)+(x\wedge a_0)$ for each $x \in E,$ we see that $m_1$
and $m_2$ are $\sigma$-additive measures on $E$ such that $m=
m_1+m_2.$  Since $t(a_0)=0,$ then $m_2(a^-_0)=m(a^-_0\wedge a_0)=0$
so that $m_2\perp t.$ We assert that $m_1 \ll_C t.$ If not, there is
an element $a \in \Ker(t)_C$ such that $m_1(a)=m(a\wedge a^-_0)>0.$
Since $a_0 \le a_0 \vee (a\wedge a^-_0) \in \Ker(t)_C$ which
contradicts the maximality of $a_0.$ This proves that $m_1\ll_C t.$

{\it Uniqueness:}  Let $F(t)_\sigma$ be the set of all
$\sigma$-additive states $s$ on $E$ such that $s \ll_C t.$  Then
$F(t)_\sigma$ is a face and due to Theorems
\ref{th:4.4}--\ref{th:4.5}, if $\{m_i\}$ is a bounded chain from
$V(F(t)_\sigma),$ then $m_0=\bigvee_im_i$ is a $\sigma$-additive
measure, and in view of (4.4), $m_0\ll_C t.$ This gives that
$V(F(t)_\sigma)$ is a $\vee$-closed cone.  Now let $m'$ be an
arbitrary $\sigma$-additive measure from $V(F(t)_\sigma)$ such that
$m'\le^+m_2.$ Then $m'(a_0)=0$ while $t(a_0)$ and $m'(a^-_0)\le
m_1(a^-_0)=0$ so that $m'=0.$  Therefore, $m_1 \in V(F(t)'_\sigma)$
so that by Theorem \ref{th:3.4} we have that the decomposition
(4.12) is unique.
\end{proof}

\begin{corollary}\label{co:3.5}
Let $X$ be any subset of a pseudo effect algebra with {\rm (RDP)},
and let $F=\{s \in \mathcal S(E): X \subseteq \Ker(s)\}.$  Every
measure $m$ on $E$ can be uniquely decomposed in the form
$$m = m_1 + m_2,$$
where $m_1\in V(F)$ and $m_2$ is $V(F)$-singular.

In particular, every state $s$ on $E$ can be uniquely expressed as a
convex combination
$$ m=\lambda s_1 +(1-\lambda)s_2,$$
where $s_1 \in F$ and $s_2 \in F'.$
\end{corollary}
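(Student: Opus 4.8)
The plan is to read the statement off as the special case of Theorem~\ref{th:3.4} corresponding to the face $F=\{s\in\mathcal S(E):X\subseteq\Ker(s)\}$. That theorem already supplies everything asked for: the existence and uniqueness of a decomposition $m=m_1+m_2$ with $m_1\in V(F)$ and $m_2\in V(F')$, together with the direct convex sum decomposition of $\mathcal S(E)$ into $F$ and $F'$. Moreover, the identity $V(F')=V(F)^\sharp$ recorded just before Theorem~\ref{th:3.4} says that $m_2\in V(F')$ is exactly the assertion that $m_2$ is $V(F)$-singular. Hence the entire burden is to verify the two hypotheses of Theorem~\ref{th:3.4} for this $F$: that $F$ is a face, and that $V(F)$ is $\vee$-closed. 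The first is handed to us directly by Proposition~\ref{pr:3.1}, which shows that $F$ is in fact a \emph{closed} face of $\mathcal S(E)$.

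The real content is the $\vee$-closedness of $V(F)$, and for this I expect to need only the monotone-limit formula (4.4) of Lemma~\ref{le:4.3}, not the full refinement formula of Theorem~\ref{th:2.1}. Let $\{m_i\}$ be a chain in $V(F)$ bounded above in $\mathcal M^+(E)$ and set $m_0=\bigvee_i m_i$. Each $m_i$ has the form $\alpha_i s_i$ with $\alpha_i\ge 0$ and $s_i\in F$, so $X\subseteq\Ker(s_i)$ forces $m_i(x)=0$ for every $x\in X$. By (4.4) we have $m_0(a)=\sup_i m_i(a)$ for all $a\in E$, so evaluating at $x\in X$ gives $m_0(x)=\sup_i m_i(x)=0$; thus $X\subseteq\Ker(m_0)$. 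Normalizing when $m_0\neq 0$ shows $m_0/m_0(1)\in F$, whence $m_0\in V(F)$ (the case $m_0=0$ being trivial), and therefore $V(F)$ is $\vee$-closed. The one place to be careful is precisely this reduction: membership in $V(F)$ is governed entirely by the vanishing condition $X\subseteq\Ker(\cdot)$, and it is this that keeps the supremum inside the cone. I expect this to be a mild obstacle at worst, since even a refinement $x=x_1+\cdots+x_n$ of some $x\in X$ causes no trouble: monotonicity already gives $m_i(x_j)\le m_i(x)=0$ for each part $x_j\le x$, so the computation would also go through with Theorem~\ref{th:2.1}.

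With both hypotheses established, Theorem~\ref{th:3.4} delivers at once the unique decomposition $m=m_1+m_2$ with $m_1\in V(F)$ and $m_2\in V(F')=V(F)^\sharp$, that is, with $m_2$ being $V(F)$-singular, which is the first assertion. The ``in particular'' statement about states is then just the direct-convex-sum conclusion of Theorem~\ref{th:3.4}: since $\mathcal S(E)$ is the direct convex sum of $F$ and $F'$, every state $s$ has a unique representation $s=\lambda s_1+(1-\lambda)s_2$ with $s_1\in F$ and $s_2\in F'$. I would also note that this corollary extends Theorem~\ref{th:3.3}, where $X$ was taken to be a normal ideal; one can in fact recover it from Theorem~\ref{th:3.3} together with Theorem~\ref{th:3.4} by passing to the normal ideal $I=\bigcap_{s\in F}\Ker(s)$, which contains $X$ and satisfies $F=\{s\in\mathcal S(E):I\subseteq\Ker(s)\}$, since each $\Ker(s)$ is itself a normal ideal.
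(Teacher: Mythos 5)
Your proposal is correct and follows essentially the same route as the paper: the paper's own proof is the one--line observation that $V(F)=\{m\in\mathcal M^+(E):X\subseteq\Ker(m)\}$ is $\vee$-closed by (4.4), after which Theorem~\ref{th:3.4} gives everything. Your write-up simply fills in the details of that same argument (checking $F$ is a face via Proposition~\ref{pr:3.1} and verifying $\vee$-closedness by evaluating the chain supremum on elements of $X$), so no further comment is needed.
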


\begin{proof}
Since $V(F)=\{m \in \mathcal M^+(E): X \subseteq \Ker(m)\}$ is by
(4.4) $\vee$-closed, the statements follow from Theorem
\ref{th:3.4}.
\end{proof}

\end{document}